\newtheorem{theorem}{Theorem}
\newtheorem{definition}{Definition}
\newtheorem{lemma}{Lemma}
\newtheorem{remark}{Remark}
\begin{document}

\title[A characterization of the $n$-torus]{A characterization of the $n$-dimensional torus via intrinsically harmonic forms}

\author{Elizeu França}
\address{Universidade Federal do Amazonas \\
69103-128 Itacoatiara, AM, Brazil}
\email{elizeufranca@ufam.edu.br}

\author{Douglas Finamore}
\address{Departament of Mathematics - ICMC, University of S\~ao Paulo}
\email{douglas.finamore@usp.br}

\subjclass[2020]{Primary 58A10; Secondary 37C10, 37C99, 58A12.}
\keywords{intrinsically harmonic form, global cross-section, volume-preserving flow}

\begin{abstract}
    The $n$-torus is the unique closed manifold supporting a set of $n$ linearly independent closed $1$-forms.
    In this paper we improve on this result and show that the torus is the unique closed $n$-dimensional manifold supporting a linearly independent set consisting of $(n-1)$ closed $1$-forms whose product determines a nonzero cohomological class. 
\end{abstract}

\maketitle

\section*{Acknowledgements}
The second author was funded by Brazilian Coordena\c c\~ao de Aperfei\c coamento de Pessoal de N\'ivel Superior (CAPES), grant PROEX-11377206/D. 
Both authors are deeply grateful to Gabriel Ponce, for introducing them to one another and making this joint work possible.

\section{Introduction}\label{sec:Intro}
\par Suppose $M$ is an $n$-dimensional closed manifold supporting a collection of $n$ linearly independent closed $1$-forms.
Then $M$ is diffeomorphic to the $n$-dimensional torus $\mathbb{T}^n$.
One way to see this is the following: using Tischler's argument \cite{tischler_fibering_1970}, the collection of linearly independent forms provide a surjective submersion $M \to \mathbb{T}^n$.
Since $M$ is compact, such a surjection must be proper, and therefore it is a covering map \cite[Lemma 3]{ho_note_1975}.
As is well-known from Lie group theory, a compact covering of $\mathbb{T}^n$ is diffeomorphic to $\mathbb{R}^n/H$, where $H \subset \pi_1(\mathbb{T}^n)=\mathbb{Z}^n$ is a subgroup of the form 
\begin{equation*}
    H \approx m_1\mathbb{Z}\times\cdots \times m_n\mathbb{Z},
\end{equation*}
with $m_i\in \mathbb{Z}$.
Therefore, $M$ and $\mathbb{T}^n$ are diffeomorphic.
\par In this note, we prove the following result.
\begin{theorem}\label{main_thm} 
    Let $M$ be a closed $n$-dimensional manifold supporting $(n-1)$-linearly independent closed $1$-forms $\lambda_1,\cdots\!,\lambda_{n-1}$. 
    If 
    \begin{equation*}
        \omega := \lambda_1\wedge\cdots\wedge\lambda_{n-1}
    \end{equation*}
    determines a nonzero cohomological class, then it is intrinsically harmonic. In particular, there exists a closed $1$-form $\eta$ such that the set $\{\lambda_1, \cdots\!,\lambda_{n-1},\eta\}$ is linearly independent, and therefore $M$ is diffeomorphic to $\mathbb{T}^n$.
\end{theorem}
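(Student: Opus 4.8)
The plan is to reduce the whole statement to the problem of producing a single closed $1$-form $\eta$ making $\{\lambda_1,\dots,\lambda_{n-1},\eta\}$ a pointwise-independent coframe, and then to solve that problem through the dynamics of the line field $\ker\omega$. Since the $\lambda_i$ are pointwise independent, $\omega=\lambda_1\wedge\cdots\wedge\lambda_{n-1}$ is nowhere zero and $L:=\bigcap_i\ker\lambda_i$ is a line field. Fixing an orientation and a volume form $\mu$ (the hypotheses will be seen to force $M$ orientable), let $Z$ be the vector field with $\iota_Z\mu=\omega$; then $Z$ is nowhere zero and spans $L$. From $d\omega=0$ we get $\mathcal L_Z\mu=d\iota_Z\mu=0$, so the flow $\phi_t$ of $Z$ preserves $\mu$; since $\lambda_i(Z)=0$ and $d\lambda_i=0$ we get $\mathcal L_Z\lambda_i=d\iota_Z\lambda_i=0$, so $\phi_t$ preserves each $\lambda_i$ (hence $\omega$, hence $Z$). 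A local computation gives, for any $1$-form $\beta$, the identity $\beta\wedge\omega=(-1)^{n-1}\beta(Z)\,\mu$. Thus a closed $1$-form $\eta$ satisfies $\omega\wedge\eta\neq 0$ everywhere iff $\eta(Z)>0$ everywhere (after replacing $\eta$ by $-\eta$ if needed); i.e. iff the volume-preserving flow of $Z$ admits a closed $1$-form strictly positive on it, which, after a Tischler-type perturbation of periods, is equivalent to admitting a global cross-section.

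Granting such an $\eta$, the two conclusions follow quickly. The set $\{\lambda_1,\dots,\lambda_{n-1},\eta\}$ consists of $n$ pointwise-independent closed $1$-forms, since their product equals $\omega\wedge\eta\neq 0$; by the classical fact recalled in the Introduction, $M$ is diffeomorphic to $\mathbb T^n$. And $\omega$ is intrinsically harmonic: orient $M$ by $\omega\wedge\eta$ and take the metric $g$ for which $(\lambda_1,\dots,\lambda_{n-1},\eta)$ is an oriented orthonormal coframe; then $\star_g\omega=\eta$, so $\delta_g\omega=\pm\star_g\,d\star_g\omega=\pm\star_g\,d\eta=0$, and together with $d\omega=0$ this makes $\omega$ $g$-harmonic.

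To produce $\eta$: by Poincaré duality and $[\omega]\neq 0$ there is a closed $\beta$ with $\int_M\beta\wedge\omega\neq 0$, i.e. $\int_M\beta(Z)\,\mu\neq 0$, so the asymptotic (Schwartzman) cycle of $\mu$ is nonzero. What is really needed is the stronger statement that $0$ lies outside the set $\mathcal A(\mathcal M)\subset H_1(M;\mathbb R)$ of asymptotic cycles of all $\phi$-invariant probability measures. If $0\notin\mathcal A(\mathcal M)$, then—$\mathcal A(\mathcal M)$ being compact and convex—a separating hyperplane gives $c\in H^1(M;\mathbb R)$ with $\langle c,\mathcal A(m)\rangle>0$ for every invariant $m$; representing $c$ by a closed $\beta$ with $h:=\beta(Z)$, a Hahn--Banach argument (if the coboundary $Z(f)$ could not strictly dominate $-h$ one would separate off an invariant probability measure with $\int h\le 0$) yields $f$ with $Z(f)+h>0$ everywhere, so $\eta:=\beta+df$ works. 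In particular this forces $b_1(M)\ge n$, and ultimately $b_1(M)=n$.

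The main obstacle is exactly the verification that $0\notin\mathcal A(\mathcal M)$, and this is where decomposability of $\omega$ together with $[\omega]\neq 0$ is indispensable. Because $\phi_t$ preserves the independent closed $1$-forms $\lambda_i$, the foliation $\mathcal F$ tangent to $L$ is a Lie foliation modeled on the abelian group $\mathbb R^{n-1}$, with developing map given by local primitives of the $\lambda_i$ and holonomy the period homomorphism $\pi_1(M)\to\mathbb R^{n-1}$. By the Fedida--Molino structure theory, the leaf closures are the fibers of a locally trivial fibration $\pi\colon M\to\mathbb T^{q}$, and $\mathcal F$ restricts to a minimal Lie $\mathbb R^{\,n-1-q}$-foliation on each fiber, hence to a uniquely ergodic flow there; so every invariant measure disintegrates over $\mathbb T^q$ into these ergodic fiber measures and $\mathcal A(m)$ is a positive combination of the images of their asymptotic cycles. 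One then shows—using that the presence of the independent closed $\lambda_i$ forces each leaf closure to be a torus, together with a Gysin/duality computation in which $[\omega]\neq 0$ confines these fiber classes to a single ray $\mathbb R_{>0}\cdot\mathrm{PD}[\omega]$—that $0\notin\mathcal A(\mathcal M)$. The hypothesis is sharp here: on the $3$-dimensional Heisenberg nilmanifold the forms $dx,dy$ are pointwise independent and closed, yet $dx\wedge dy$ is exact, the line field $\ker(dx)\cap\ker(dy)$ integrates to a foliation by null-homologous circles, every invariant measure has zero asymptotic cycle, and $dx\wedge dy$ is not intrinsically harmonic—precisely the scenario excluded by $[\omega]\neq 0$.
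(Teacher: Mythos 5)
Your overall skeleton is sound and largely parallels the paper: the reduction to producing a closed $1$-form $\eta$ with $\eta(X_0)>0$ (equivalently, a global cross-section for the volume-preserving flow spanning $\ker\omega$), the identity $\int_M\beta\wedge\omega=\int_M\beta(X_0)\,\mu$ showing that $[\omega]\neq 0$ makes the asymptotic cycle of the volume measure nonzero, and the final step that an orthonormal coframe $(\lambda_1,\dots,\lambda_{n-1},\eta)$ gives $*_g\omega=\pm\eta$ closed, hence $\omega$ harmonic and $M\cong\mathbb{T}^n$ --- these correspond to Lemma \ref{canonicalCycleVolPreservingFlow}, Theorems \ref{Schwartzman}, \ref{pre_char} and \ref{compFolClosedForm}, and the proof of Theorem \ref{torus_char}. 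You also correctly identify the crux: one must show that \emph{every} invariant measure has nonzero asymptotic cycle, i.e.\ $0\notin\mathcal A(\mathcal M)$, not just the volume measure.

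That crux, however, is exactly the step you do not prove. The passage ``One then shows---using that the independent closed $\lambda_i$ force each leaf closure to be a torus, together with a Gysin/duality computation in which $[\omega]\neq 0$ confines these fiber classes to a single ray---that $0\notin\mathcal A(\mathcal M)$'' is an outline of the hard part, and each ingredient is unjustified: (i) that the leaf closures of the $1$-dimensional Lie $\mathbb{R}^{n-1}$-foliation are tori does not follow from Fedida--Molino alone; it is essentially equivalent to knowing the flow is isometrizable, which is what the paper establishes by a separate argument (the explicit metric $g=\sum_i\lambda_i\otimes\lambda_i$, foliateness of the dual frame, minimality of the orbits in Theorem \ref{riem_geod_flow}, and then Carri\`ere's Theorem \ref{thmCar}); (ii) unique ergodicity of the \emph{flow} on a leaf closure depends on the time parametrization along the leaves, not only on the transverse Lie structure, and is not established; (iii) even granting the fibered picture and the disintegration of invariant measures, the claim that $[\omega]\neq 0$ forces all fiber asymptotic cycles into one positive ray is precisely the assertion $0\notin\mathcal A(\mathcal M)$ restated, with no computation supplied --- a priori some invariant measure supported on a single leaf closure could have vanishing cycle. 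The paper closes this gap by a different mechanism: Riemannian $+$ geodesible $\Rightarrow$ Killing (Carri\`ere) $\Rightarrow$ equicontinuous $\Rightarrow$ recurrent (Theorem \ref{equi_flow_recur}) $\Rightarrow$ by Schwartzman a recurrent flow has a \emph{single} asymptotic cycle, which equals $[\omega]\neq 0$ by Lemma \ref{canonicalCycleVolPreservingFlow}, whence the cross-section and, via Theorem \ref{pre_char}, intrinsic harmonicity. (A minor further point: you promise that the hypotheses ``force $M$ orientable'' but never return to it; orientability is in fact an assumption one must make explicit, and your argument, like the paper's, uses it essentially when defining $Z$ by $\iota_Z\mu=\omega$ and when taking $*_g\omega$.) As it stands, then, your proposal has a genuine gap at its central step, and the sketched Molino-theoretic route would need substantial additional argument --- or replacement by something like the paper's Carri\`ere--equicontinuity--recurrence chain --- to be complete.
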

\par Recall that harmonicity of differential forms is defined in terms of a Riemannian metric tensor on the ambient manifold.
A natural question is then: given a closed form $\omega$ on a compact manifold $M$, is there a Riemannian metric $g$ on $M$ such that $\omega$ is harmonic with respect to $g$?
If such a metric exists, $\omega$ is said to be \textit{intrinsically harmonic}.
\par The problem of obtaining an intrinsic characterization of harmonic forms was first placed by Calabi \cite{calabi_intrinsic_1969}. 
He showed that, under suitable conditions on its zero-set, an $1$-form $\omega$ is intrinsically harmonic if and only if it is \emph{transitive}. 
Transitivity means that every point that is not a zero for $\omega$ is contained in an embedded circle, on which the restriction of $\omega$ never vanishes.
\par Volkov generalized Calabi's characterization of intrinsically harmonic $1$-forms by showing that a closed $1$-form is intrinsically harmonic if, and only if, it is transitive and there is a neighborhood of its zero set where it is harmonic  \cite{volkov_characterization_2008}. 
\par In \cite{honda_harmonic_1996}, Honda proved a ``dual'' version of Calabi-Volkov's result, showing that a closed $(n-1)$-form is intrinsically harmonic, under suitable conditions on its zero-set, provided that it is transitive.
Similarly to the $1$-dimensional case, a $k$-form $\omega$ is transitive if any ``regular point'' is contained in a closed $k$-dimensional submanifold on which  $\omega$ restricts to a volume form.
\par Using foliation theory, one can show that a nowhere-vanishing closed $1$-form on a closed manifold is transitive, hence intrinsically harmonic.
The same conclusion is not true, however, for forms of degree $(n-1)$.
As an example, consider de Hopf's fibration $H:\mathbb{S}^3 \to \mathbb{S}^2$, and let $\Omega$ a volume form on $\mathbb{S}^2$.
Then $H^*\Omega$ is a closed nowhere vanishing $2$-form on $\mathbb{S}^3$ which can not be harmonic for any Riemannian metric $g$, because nonzero harmonic forms determine nonzero cohomological classes, however $H^2(\mathbb{S}^3)=0$. 
The main motivation behind this paper, as well some previous relate work, is to show that nonvanishing closed $(n-1)$-forms are intrinsically harmonic, provided they determine a nonzero cohomological class.
In this paper, we show that the form $\omega=\lambda_1\wedge\cdots\wedge\lambda_{n-1}$ from Theorem \ref{main_thm} represents a nonzero cohomological class if, and only if, it is intrinsically harmonic.
\par It is known that, given a volume form $\Omega$ on an orientable manifold, every closed $(n-1)$-form corresponds to a unique vector field whose flow preserves $\Omega$ (cf. Section \ref{rank1_form}).
Conversely, any volume-preserving flow induces a closed $(n-1)$-form via interior multiplication.
The fixed points of the flow are exactly the singular points of the associate form.
In Section \ref{sec:k-forms} we present the following criteria for harmonicity of such forms.
\begin{theorem}\label{pre_char} 
    Let $M$ be a closed orientable manifold.
    A nowhere-vanishing volume-preserving flow defined on $M$ admits a global cross-section if and only if the induced closed nowhere-vanishing $(n-1)$-form is intrinsically harmonic.
\end{theorem}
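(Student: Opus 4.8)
The plan is to route both implications through the classical equivalence between cross-sections and closed $1$-forms: the flow of a nowhere-vanishing vector field $X$ on a closed manifold admits a global cross-section if and only if there is a closed $1$-form $\alpha$ with $\alpha(X)>0$ everywhere (this is the heart of Tischler's fibering argument, cf. \cite{tischler_fibering_1970}). Fix a volume form $\Omega$ on $M$, let $X$ be the vector field determined by $\iota_X\Omega=\omega$, where $\omega$ denotes the induced $(n-1)$-form, and note that $\omega$ vanishes nowhere exactly when $X$ does. The only algebraic fact needed is that for every $1$-form $\beta$ one has $\beta\wedge\omega=\beta(X)\,\Omega$; this follows by expanding $0=\iota_X(\beta\wedge\Omega)$ with the interior-product Leibniz rule, since $\beta\wedge\Omega$ has degree $n+1$.

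For the implication ``$\omega$ intrinsically harmonic $\Rightarrow$ cross-section'', suppose $g$ makes $\omega$ harmonic. Because $d\omega=0$, harmonicity reduces to $d\star_g\omega=0$, so $\beta:=\star_g\omega$ is a closed $1$-form. Combining $\omega\wedge\star_g\omega=|\omega|_g^2\,\mathrm{vol}_g$ with the identity above and with $\mathrm{vol}_g=c\,\Omega$ for a positive function $c$ (taking the $g$-orientation to be that of $\Omega$), one gets $\beta(X)=(-1)^{n-1}c\,|\omega|_g^2$, which is nowhere zero since $\omega$ never vanishes. Replacing $\beta$ by $-\beta$ if necessary, we obtain a closed $1$-form positive on $X$, and the classical result furnishes a global cross-section.

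For the converse, start from a global cross-section and the resulting closed $1$-form $\alpha$ with $\alpha(X)>0$; I will build a metric $g$ with $\star_g\omega=\pm\alpha$, which is closed, whence $\delta_g\omega=0$ and (with $d\omega=0$) $\omega$ becomes $g$-harmonic. Since $\alpha(X)\neq 0$, the splitting $TM=\mathbb{R}X\oplus\ker\alpha$ is a smooth one; declare it $g$-orthogonal, set $g(X,X):=\alpha(X)$, so that automatically $X^\flat=\alpha$, and let $g$ restrict on $\ker\alpha$ to a bundle metric $h$ still to be chosen. Along this splitting $\mathrm{vol}_g=\sqrt{\alpha(X)}\cdot\mu_h$, where $\mu_h$ is the volume form of $h$; since $\omega$ restricts to a nowhere-vanishing $(n-1)$-form on the rank-$(n-1)$ bundle $\ker\alpha$, rescaling any background bundle metric by a positive function produces an $h$ with $\mu_h=\omega|_{\ker\alpha}/\sqrt{\alpha(X)}$, and then $\mathrm{vol}_g=\Omega$. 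With $\mathrm{vol}_g=\Omega$ and $X^\flat=\alpha$, the identity $\iota_X\mathrm{vol}_g=\star_g X^\flat$ gives $\omega=\star_g\alpha$, hence $\star_g\omega=(-1)^{n-1}\alpha$ is closed, as desired.

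The main obstacle is the metric construction in the converse: $g$ must simultaneously realize $\alpha$ as the metric dual of $X$ and reproduce the prescribed volume form $\Omega$, and one must check these two requirements are compatible. The decomposition $TM=\mathbb{R}X\oplus\ker\alpha$ is what makes this work, because it separates the datum $\alpha(X)$, which pins down $g$ on the $X$-line, from the remaining freedom on $\ker\alpha$, which is exactly a free choice of bundle volume form and hence can be used to correct the total volume; carrying this out with attention to smoothness and orientations is the technical core. A lesser point, should one prefer not to quote it, is the cross-section criterion itself, whose substantive half uses a rational perturbation of $\alpha$ to produce a fibration of $M$ over $S^1$ with fibers transverse to $X$.
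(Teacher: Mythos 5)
Your proposal is correct, and its second half takes a genuinely different route from the paper's. The forward implication is essentially the same in both: take the closed $1$-form $\beta=\star_g\omega$, observe via $\beta\wedge\omega=\beta(X)\,\Omega$ that $\beta(X)$ is nowhere zero and of constant sign, and run the Tischler-type perturbation to an integral-period form and a fibration over $\mathbb{S}^1$ whose fibers are cross-sections (the paper fills in the ``every fiber meets every orbit'' step explicitly via Ehresmann's lemma and the foliated-bundle proposition). For the converse, the paper does not extract a closed $1$-form from the cross-section at all: it slides the cross-section along the flow to conclude that $\omega$ is transitive, applies Calabi's argument (Lemma~\ref{Calabi's argument}) to manufacture a closed $1$-form $\psi$ with $\omega\wedge\psi>0$, and then feeds $\psi$ into the general rank criterion of Theorem~\ref{compFolClosedForm} to produce the metric. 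You instead invoke the classical equivalence ``global cross-section $\Leftrightarrow$ closed $1$-form $\alpha$ with $\alpha(X)>0$'' --- note that the half you actually use here (cross-section $\Rightarrow$ $\alpha$) is not Tischler's argument but the mapping-torus/Schwartzman--Fried direction, so it should be cited or proved as such --- and then build the metric by hand: orthogonal splitting $TM=\mathbb{R}X\oplus\ker\alpha$, $g(X,X)=\alpha(X)$ so that $X^\flat=\alpha$, and a conformal choice of bundle metric on $\ker\alpha$ normalizing $\mathrm{vol}_g=\Omega$, whence $\star_g\alpha=\iota_X\mathrm{vol}_g=\omega$ and $\star_g\omega=(-1)^{n-1}\alpha$ is closed. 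This is in effect an explicit, self-contained instance of the construction in Theorem~\ref{compFolClosedForm} for $k=n-1$: it buys independence from Calabi's covering argument and from the general rank-$k$ machinery, at the cost of quoting the cross-section criterion; the paper's route keeps everything internal and its two lemmas serve arbitrary rank-$k$ forms. Two small points to tidy: the relation $\mathrm{vol}_g=\sqrt{\alpha(X)}\cdot\mu_h$ equates an $n$-form with an $(n-1)$-form and should be written, say, as $\mathrm{vol}_g=\alpha(X)^{-1/2}\,\alpha\wedge\mu_h$ with $\mu_h$ extended to $M$ by $\iota_X\mu_h=0$ (the frame computation you intend does give $\mathrm{vol}_g=\Omega$); and you should fix an orientation of $\ker\alpha$ with $\omega\rvert_{\ker\alpha}>0$ before the conformal rescaling --- both are routine.
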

\noindent By a \emph{global cross section} we mean a closed $(n-1)$-dimensional submanifold everywhere transverse to the flow and cutting every orbit.
Necessary and sufficient conditions for the existence of a global cross-section are given in \cite{fried_geometry_1982} and \cite{schwartzman_asymptotic_1957}, for instance.
In \cite{schwartzman_asymptotic_1957}, Schwartzman relates global cross sections to the notion of \emph{asymptotic cycles}, which are real homology classes defined for each flow-invariant measure.
A global cross-section is determined by an integral $1$-dimensional cohomology class that is positive on every asymptotic cycle.
In the case of recurrent flows (cf. Definition \ref{recur_flow}), Schwartzman shows the existence of a single asymptotic cycle.  
\par In our case of interest, $\Omega$ is a volume form on a manifold $M$, associated to the measure $\mu(B) := \int_B\Omega$.
Given a smooth vector field  $X$ preserving $\Omega$ (and therefore also $\mu$), the asymptotic cycle $A_\mu$ associated with the measure $\mu$ is completely determined by the differential form $\omega:= \iota_X\Omega$. 
In particular, if $\omega$ determines a nonzero cohomological class, then $A_\mu\neq 0$ in $H_1^{dR}(M)$ (cf. Lemma \ref{canonicalCycleVolPreservingFlow}).
If, in addition, $X$ is (uniformly) recurrent, the existence of a global cross-section depends only on the homological class $[\omega]=A_\mu$. 
\par Due to Poincar{{\'e}}'s Recurrence Theorem, a volume-preserving flow on a compact manifold is pointwise recurrent.
It is only natural to ask under what conditions one can obtain uniform recurrence for such flows.
The recurrence property is equivalent to the existence of a suitable function sequence converging uniformly to the identity.
In general, a uniformly converging sequence of functions from a compact metric space to itself determines an (uniformly) equicontinuous set.
Thus, a natural follow-up question is when, given a flow $\Phi$, the collection $\{\Phi^t\}_{t\in\mathbb{R}}$ is an (uniformly) equicontinuous set of functions. 
Such flows are called \emph{equicontinuous} (cf. Definition \ref{equi_fam}), and these satisfy the recurrence property.
For instance, flows induced by Killing fields on Riemannian manifolds are equicontinuous.
Therefore, in order to obtain Theorem \ref{main_thm}, we show that the flow induced by $\omega$ is Killing for some Riemannian metric on $M$.
With this we can guarantee the recurrence of the flow and, using Schwartzman Theory, the existence of a global cross-section.
Then Theorem \ref{main_thm} follows as a consequence of Theorem \ref{pre_char}.\smallskip

\noindent \textbf{Some context}.
One of the primary goals of the first author's recent research has been to prove the ``dual" of Tischler's theorem, that is, to show that an orientable manifold supporting a closed nowhere-vanishing $(n-1)-$form 
 fibers over $\mathbb{S}^1$. In order to that, some sort of classification theorem for volume-preserving flows admitting global cross-sections is required.
Ideally, we would like to show that if a $(n-1)$-form, which is naturally associated with such a flow, represents a nonzero cohomological class, then the desired cross-section exists. 
\par In this paper, we look at a particular case where the flow naturally associated with a intrinsically harmonic $(n-1)$-form is Killing with respect to some metric on the ambient manifold.
With such conditions, we are able to show that the flow is recurrent, and what is more, give a partial classification result in the form of Theorem \ref{int_har_char2}.
The flow's recurrence can then be used in conjunction with Schwartzman's theory of asymptotic cycles to obtain a global cross-section, and thus a ``complementary form."
As a consequence of these results, we obtain the characterization of torus stated in Theorem \ref{main_thm}.
\par Of course, other possible restrictions on the flow exist as well. 
For instance, in a recent study \cite{francca2023pointwise}, we explored the scenario where the flow is pointwise periodic and investigated whether this implies equicontinuity or recurrence.
The results showed that even for only continuous flows, equicontinuity implies periodicity, indicating it is it in fact a robust condition. \smallskip

\noindent \textbf{Organization}.
In Section \ref{sec:Prelim} we briefly discuss required notions and results on induced flows, asymptotic cycles and equicontinuous actions. Section \ref{sec:k-forms} relates intrinsically harmonic forms to forms of constant rank, and culminates with the proof of Theorem \ref{pre_char}. In Section \ref{sec:vol_pre_char} we provide a characterization for volume-preserving flows supporting cross-sections, which is then used to prove Theorem \ref{main_thm} in Section \ref{sec:tor_char}.

\begin{center}
    \emph{ Throughout this work, all manifolds, differential forms, flows and vector fields are considered to be $C^\infty$}.
\end{center}

\section{Preliminaries}\label{sec:Prelim}
 \subsection{The induced flow of an $(n-1)$-form on an orientable manifold}\label{rank1_form}
Let $\omega$ be a $k$-form on a $n$-dimensional manifold $M$.
We will denote by $\ker\omega$ the distribution on $M$ given by
\begin{equation*}
	x\mapsto \ker\omega\rvert_x=\{v\in T_xM; \iota_v\omega=0\}.
\end{equation*}
\begin{definition}[rank of a differential form]
    The \emph{rank} of $\omega$ at a point $x\in M$ is defined by $r_x=n-\dim\ker_x$.
    A form $\omega$ is said to be of \emph{constant rank} if $r_x$ is independent of $x$. 
\end{definition}
Every closed form of constant rank $k$ defines a $k$-dimensional foliation $\mathcal{F} $ on its ambient manifold.
Indeed, let $\omega$ be a closed form of rank $k$.
Given $X,Y\in\ker\omega$, we use the identity $\iota_{[X,Y]}=[\mathcal{L}_X,\iota_Y]$ and obtain
\begin{equation*}
	\iota_{[X,Y]}\omega=\mathcal{L}_X \iota_Y\omega - \iota_Y\mathcal{L}_X\omega=\mathcal{L}_X \underbrace{\iota_Y\omega}_{0} - \iota_Y(\iota_X \underbrace{d\omega}_0 + d\underbrace{\iota_X\omega}_0) = 0,
\end{equation*}
that is, the distribution $\ker\omega$ is involutive.
Hence, using Frobenius' Theorem, we conclude that $\ker\omega$ is tangent to a foliation $\mathcal{F} $ of codimension $k$. 
\par For the particular case of an orientable manifold, we can show that every closed $(n-1)$-form has rank $(n-1)$, and that the associated foliation is the flow of a volume-preserving field. 
What is more, to every volume preserving field we can associate a closed $(n-1)$-form, yielding the following.
\begin{theorem}
    On an orientable manifold, every closed $(n-1)$ form has constant rank $(n-1)$. Moreover, there is a bijection between the sets of volume preserving fields and closed $(n-1)$-forms.
\end{theorem}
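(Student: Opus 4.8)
The plan is to fix, once and for all, a volume form $\Omega$ on $M$ (possible because $M$ is orientable) and to analyse the map $\Psi$ sending a vector field $X$ to the $(n-1)$-form $\iota_X\Omega$. First I would check that for each $x\in M$ the linear map $\Psi_x\colon T_xM\to\Lambda^{n-1}T_x^*M$ is an isomorphism: both spaces have dimension $n=\binom{n}{n-1}$, so it suffices to verify injectivity, and if $v\neq 0$ one completes it to a basis $v=e_1,\dots,e_n$ of $T_xM$, whence $(\iota_v\Omega)(e_2,\dots,e_n)=\Omega(v,e_2,\dots,e_n)\neq 0$ and thus $\iota_v\Omega\neq 0$. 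Since $\Psi$ is $C^\infty(M)$-linear and a fibrewise isomorphism, it is a bijection between vector fields on $M$ and $(n-1)$-forms on $M$, with smooth inverse.

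To single out which vector fields correspond to closed forms, I would invoke Cartan's magic formula. Because $\Omega$ has top degree, $d\Omega=0$, and hence $\mathcal{L}_X\Omega=d\iota_X\Omega+\iota_Xd\Omega=d(\iota_X\Omega)$. Therefore $\iota_X\Omega$ is closed if and only if $\mathcal{L}_X\Omega=0$, i.e. if and only if the flow of $X$ preserves $\Omega$; so $\Psi$ restricts to the asserted bijection between $\Omega$-preserving vector fields and closed $(n-1)$-forms. For the rank statement, note that the rank is a pointwise, purely algebraic quantity, so closedness plays no role here: writing $\omega=\iota_X\Omega$ with $X=\Psi^{-1}(\omega)$, for $v,w_1,\dots,w_{n-2}\in T_xM$ one has $(\iota_v\omega)(w_1,\dots,w_{n-2})=\Omega(X_x,v,w_1,\dots,w_{n-2})$, which vanishes identically in the $w_i$ precisely when $v$ and $X_x$ are linearly dependent. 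Hence $\ker\omega_x=\mathrm{span}(X_x)$ wherever $X_x\neq 0$, so $r_x=n-1$ at every point at which $\omega$ does not vanish; in particular a nowhere-vanishing closed $(n-1)$-form has constant rank $n-1$, and the associated foliation, being tangent to $\ker\omega=\mathrm{span}(X)$, is exactly the $1$-dimensional orbit foliation of the flow of $X$.

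I do not expect any of these steps to be a genuine obstacle: the result is essentially bookkeeping with interior products, together with Cartan's identity. The only points deserving some care are the fibrewise nondegeneracy of $\Psi$ (handled by the basis argument above, or dually by the isomorphism $\Lambda^{n-1}T_x^*M\cong T_xM$ induced by $\Omega_x$) and keeping track of the fact that the correspondence is canonical only after $\Omega$ has been fixed — a different choice of volume form yields a different identification and a different notion of which fields are volume preserving.
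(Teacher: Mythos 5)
Your proof is correct, and for the converse direction it is in fact more direct and more general than the paper's. The forward implication (Cartan's formula together with $d\Omega=0$, so that $\iota_X\Omega$ is closed exactly when $\mathcal{L}_X\Omega=0$) is the same in both arguments. For the converse --- that every closed $(n-1)$-form comes from a volume-preserving field --- the paper only treats in detail the decomposable case $\omega=\lambda_1\wedge\cdots\wedge\lambda_{n-1}$: it builds the dual frame $X_1,\dots,X_{n-1}$, shows $\ker\omega=\cap_i\ker\lambda_i$ is a line field, picks a spanning field $X_0$ and a dual $1$-form $\lambda_0$, and sets $\Omega:=\lambda_0\wedge\cdots\wedge\lambda_{n-1}$, so that $\omega=\iota_{X_0}\Omega$; the general case is only indicated via coordinate patches and partitions of unity. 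You instead fix $\Omega$ once and observe that $X\mapsto\iota_X\Omega$ is a $C^\infty(M)$-linear, fibrewise isomorphism $TM\to\Lambda^{n-1}T^*M$, hence a bijection on sections; this handles arbitrary (not necessarily decomposable) closed $(n-1)$-forms in one stroke and makes explicit that the bijection depends on the chosen $\Omega$ --- note that in the paper's converse the volume form is constructed from $\omega$ rather than fixed in advance, which is the formulation you rightly flag. You are also right to qualify the rank claim: $\ker\omega_x=\mathrm{span}(X_x)$ only where $X_x\neq 0$, so constant rank $n-1$ holds for nowhere-vanishing forms and closedness is irrelevant to it; the paper's literal statement glosses over this, though in its intended setting the forms do not vanish. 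A minor point: ``volume preserving'' is best read as $\mathcal{L}_X\Omega=0$ (which is exactly what your Cartan computation characterizes), sidestepping completeness of the flow on a non-compact $M$.
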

To see this, let $M$ be an $n$-dimensional orientable manifold equipped with a volume form $\Omega$.
\par First, we note that given any volume preserving field $X$, the $1$-form $\omega = \iota_X\Omega$ is closed, since by Cartan's Formula
\begin{equation*}
    \mathrm{d}\omega = \mathrm{d}(\iota_X\Omega) = \mathcal{L}_X\Omega - \iota_X\mathrm{d}\Omega = 0.
\end{equation*}
Note that $\iota_X\omega = \iota_X\iota_X\Omega =0$, so that $X$ is tangent to $\ker\omega$.
On the other hand, there can be no vector $Y \in \ker\omega$ linearly independent to $X$, since this would imply $\iota_Y\iota_X\Omega = 0$, contradicting the fact that $\Omega$ is a volume form.
So indeed, $\ker\omega$ is one-dimensional at every point, and $\omega$ has constant rank $(n-1)$.
\par It remains to show that every closed $(n-1)$-form on $M$ has rank $(n-1)$ and its associated to a volume preserving field $X$.
In order to do that, we will focus in the particular case were the $(n-1)$-form $\omega$ consists of the product of $n-1$ closed linearly independent $1$-forms, 
\begin{equation*}
    \omega = \lambda_1 \wedge \cdots \wedge \lambda_{n-1},
\end{equation*}
since this is the relevant scenario in Section \ref{sec:tor_char}.
We remark, however, that our arguments can be easily adapted to the case of general closed $(n-1)$-forms using coordinate patches and partitions of unity.
\par At each point $x \in M$, the linear functionals $\lambda_i\rvert_x: \mathrm{T}_xM \to \mathbb{R}$ are linearly independent, hence the intersection of their kernels consists of a line $\cap\ker\lambda_i\rvert_x \subset \mathrm{T}_xM$.
Moreover, one can find a unique set of linearly independent vectors $\{X_1\rvert_x, \cdots\!, X_{n-1}\rvert_x\} \subset \mathrm{T}_xM$ satisfying relations
\begin{equation}\label{dualfields}
\lambda_i\rvert_x(X_j\rvert_x) = \delta_{ij}.
\end{equation}
We can define linearly independent vector fields on $M$ by setting
\begin{align*}
X_i: M & \to \mathrm{T} M\\
x &\mapsto X\rvert_x.
\end{align*}
These fields are as smooth as the $1$-forms $\lambda_i$.
\begin{theorem}
The equality 
\begin{equation*}
\ker\omega\rvert_x = \bigcap_i\ker\left(\lambda_i\rvert_x\right)
\end{equation*}
holds for every $x \in M$.
In particular, $\ker\omega = \cap\ker\lambda_i$ is a line distribution on $TM$, and therefore integrable.
\end{theorem}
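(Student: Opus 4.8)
The plan is to establish the pointwise identity $\ker\omega\rvert_x = \bigcap_i\ker(\lambda_i\rvert_x)$ by a direct linear-algebra computation in each cotangent space $\mathrm{T}^*_xM$, and then read off the ``in particular'' claims from it.

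First I would fix $x \in M$ and expand the interior product using the Leibniz rule: for any $v \in \mathrm{T}_xM$,
\[
\iota_v\omega\rvert_x \;=\; \sum_{i=1}^{n-1}(-1)^{i-1}\,\lambda_i\rvert_x(v)\;\lambda_1\rvert_x\wedge\cdots\wedge\widehat{\lambda_i\rvert_x}\wedge\cdots\wedge\lambda_{n-1}\rvert_x .
\]
The inclusion $\bigcap_i\ker(\lambda_i\rvert_x)\subseteq\ker\omega\rvert_x$ is then immediate, since if every $\lambda_i\rvert_x(v)=0$ the right-hand side vanishes. For the reverse inclusion, the key observation is that the $(n-2)$-forms $\beta_i := \lambda_1\rvert_x\wedge\cdots\wedge\widehat{\lambda_i\rvert_x}\wedge\cdots\wedge\lambda_{n-1}\rvert_x$, for $i=1,\dots,n-1$, are linearly independent in $\Lambda^{n-2}\mathrm{T}^*_xM$. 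Indeed, $\{\lambda_1\rvert_x,\dots,\lambda_{n-1}\rvert_x\}$ is a basis of the $(n-1)$-dimensional subspace $W_x\subseteq\mathrm{T}^*_xM$ it spans, and the wedge products of its $(n-2)$-element sub-families form a basis of $\Lambda^{n-2}W_x$; the $\beta_i$ are exactly these basis vectors up to sign. Hence $\iota_v\omega\rvert_x = 0$ forces $\lambda_i\rvert_x(v)=0$ for all $i$, giving $v\in\bigcap_i\ker(\lambda_i\rvert_x)$ and thus the desired equality.

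For the ``in particular'' statement I would argue as follows. At each $x$, the functionals $\lambda_1\rvert_x,\dots,\lambda_{n-1}\rvert_x$ are linearly independent in the $n$-dimensional space $\mathrm{T}^*_xM$, so $\bigcap_i\ker(\lambda_i\rvert_x)$ has dimension exactly $1$. To see this line field is a genuine (smooth) distribution, note that $v\mapsto(\lambda_1(v),\dots,\lambda_{n-1}(v))$ is a smooth, fiberwise surjective bundle map $\mathrm{T}M\to M\times\mathbb{R}^{n-1}$, so its kernel is a smooth rank-$1$ subbundle of $\mathrm{T}M$; rank-one distributions are automatically integrable (or, alternatively, integrability follows from the Frobenius argument already given, since $\omega$ is closed of constant rank $n-1$).

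I do not expect a genuine obstacle here: the only point requiring care is the verification that the $\beta_i$ are linearly independent, which is just the standard fact that wedge products of sub-families of a basis form a basis of the relevant exterior power; everything else is bookkeeping. (I note that the same computation, performed in coordinate charts and glued with a partition of unity, handles a general closed $(n-1)$-form, as remarked earlier in the text.)
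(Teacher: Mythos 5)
Your proof is correct and follows essentially the same route as the paper: both expand $\iota_v\omega\rvert_x$ by the Leibniz rule, get the inclusion $\bigcap_i\ker(\lambda_i\rvert_x)\subseteq\ker\omega\rvert_x$ immediately, and then show that $\iota_v\omega\rvert_x=0$ forces every coefficient $\lambda_i\rvert_x(v)$ to vanish. The only cosmetic difference is in that last step: you invoke the linear independence of the $(n-2)$-fold products $\beta_i$ in $\Lambda^{n-2}\mathrm{T}^*_xM$, whereas the paper reaches the same conclusion by evaluating $\iota_v\omega\rvert_x$ on the dual frame vectors $X_1\rvert_x,\dots,\widehat{X_j\rvert_x},\dots,X_{n-1}\rvert_x$ with $\lambda_i(X_j)=\delta_{ij}$ --- two equivalent expressions of the same linear algebra.
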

\begin{proof}
	Given $X \in \mathrm{T}_xM$, suppose $X \in \cap\ker\lambda_i\rvert_x$. 
	Then
	\begin{equation*}
   		\iota_X\omega\rvert_x = \sum_{i=1}^{n-1}(-1)^{i-1}\underbrace{\lambda_i\rvert_x(X)}_{ 0 } \lambda_1\rvert_x \wedge \cdots \wedge \widehat{\lambda_i\rvert_x} \wedge \cdots \wedge \lambda_{n-1}\rvert_x = 0,
	\end{equation*}
	hence $\cap\ker\lambda_i\rvert_x \subset \ker\omega\rvert_x$.
	We claim that the set $\left(\ker\omega\rvert_x\right) \setminus \left(\cap\ker\lambda_i\rvert_x\right)$ is empty.
	Indeed, if $X$ is a vector in $\ker\omega\rvert_x$ but not in $\cap\ker\lambda_i\rvert_x$, then $\lambda_j\rvert_x(X) \neq 0$ for some $1 \leq j \leq n-1$.
	Hence, evaluating $\iota_X\omega\rvert_x$ at the linearly independent set $\{X_1\rvert_x, \cdots\!, \widehat{X_j\rvert_x}, \cdots\!, X_{n-1}\rvert_x\}$ of vectors satisfying Equations \ref{dualfields}, we obtain
	\begin{equation*}
    	0 = \iota_X\omega(X_1\rvert_x, \cdots\!, \widehat{X_j\rvert_x}, \cdots\!, X_{n-1}\rvert_x) = (-1)^{j-1}\lambda_j\rvert_x(X),
	\end{equation*}
	a contradiction.
	Thus $\ker\omega\rvert_x \subset \cap\ker\lambda_i\rvert_x$, and equality follows.
\end{proof}
\par Since the distribution $\ker\omega$ is one-dimensional at every point, it is spanned by a single nonvanishing vector field, say $X_0$.
The underlying foliation $\mathcal{F} $ is exactly the unparameterized flow of $X_0$.
\par Moreover, the set $\{X_0, X_1, \cdots\!, X_{n-1}\}$ is a global frame to $M$, which we can use define a $1$-form $\lambda_0$ on $M$ by imposing 
\begin{equation*}
	\lambda_0(X_i) = \delta_{0i}
\end{equation*}
and extending it linearly.
It follows that $\Omega := \lambda_0\wedge\cdots\wedge\lambda_{n-1}$ is a volume form on $M$, and that $\omega = \iota_{X_0}\Omega$.
As a consequence, $\mathcal{L}_{X_0}\Omega = 0$, so $X_0$ is volume-preserving, as we wanted.
\begin{remark}\label{lambda0notclosed}
	If $\lambda_0$ were closed, then $\{\lambda_0, \cdots\!, \lambda_{n-1}\}$ would be a linearly independent set of closed $1$-forms, and Theorem \ref{main_thm} would be proved.
 However, in general $\mathrm{d}\lambda_0 \neq 0$.
 To see this, note that, for $i = 1, \cdots\!, n-1,$ each vector field $X_i$ is \emph{foliate}, that is, it satisfies 
\begin{equation*}
[X_0, X_i] \in \ker\omega.
\end{equation*}
Indeed, using the invariant formula for the derivative of $1$-forms, we obtain
\begin{equation*}
     \lambda_j([X_0, X_i]) = X_0 \underbrace{ \lambda_j(X_i)}_{\delta_{ij}} - X_i\underbrace{\lambda_j(X_0)}_{ 0 } - \underbrace{\mathrm{d}\lambda_j}_{ 0 }(X_0, X_i) = 0.
\end{equation*}
Thus,
\begin{equation*}
[X_0, X_i] \in \bigcap_{j=1}^{n-1}\ker\lambda_j = \ker\omega.
\end{equation*} 
Hence, given a field $N$ tangent to the sub-bundle spanned by the frame $\{X_1, \cdots\!, X_{n-1}\}$, we have $[N, X_0] = fX_0$ and therefore
\begin{equation*}
\mathrm{d}\lambda_0(N,X_0) = N\lambda_0(X_0) - X_0\lambda_0(N) - \lambda_0([N, X_0]) = f.
\end{equation*}
\end{remark}

\subsection{Equicontinuity and uniform recurrence}
\par Poincar{{\'e}}'s Recurrence Theorem states that every point of an orientable compact manifold is recurrent under a volume-preserving flow.
We want to show that if one asks for a stronger condition on the flow, namely equicontinuity, then one gets a stronger form of recurrence as well.
\begin{definition}\label{equi_fam}
A set $T $ of continuous functions from a topological space $X$ into a metric space $(Y,d)$ is  called \emph{equicontinuous} provided corresponding to each $x_0\in X$ and each
$\epsilon>0$, there is a neighborhood $U$ of $x_0$ in $ X $ such that $d(f(x_0),f(x)) < \epsilon$ for all $f \in T$ and all $x \in U$.
\end{definition}
\par In particular, a flow on a metric space $(X,d)$ is equicontinuous if the family $T = \{t: X \to X \}_{t \in \mathbb{R}}$ is equicontinuous.
Explicitly, given $\epsilon>0$ there exists $\delta>0$ such that 
\begin{equation*}
    d(x,y)<\delta \implies d(xt, yt)<\epsilon 
\end{equation*}
for all $t\in\mathbb{R}$.
\begin{definition}\label{recur_flow}
A flow  on a metric space $(X,d)$ is said to be recurrent if there exists a sequence $t_n \to \infty$ such that $\lim_{n}\sup_{x}d(x,xt_n)=0$.
\end{definition}
\par Let $X$ be a  topological space, and a $(Y,d)$ be a metric space.
The space $C(X,Y)$ of all continuous functions of $X$ to $Y$ has a natural topology induced by the metric $d$.
It is called \emph{the topology of uniform convergence}
\footnote{
    More generally, when $Y$ is a uniform space, one can define this notion analogously.
} 
and has as sub-basis for open subsets the sets
\begin{equation*}
    B_\epsilon(f)=\{g\in C(X,Y);~ \sup_{x\in X}d(f(x),g(x))<\epsilon\}
\end{equation*}
where $f$ ranges over $C(X,Y)$, and $\epsilon$ over all the real numbers.
The topology of uniform convergence coincides with the compact open topology whenever the domains $X$ is compact \cite[Chapter 7, Theorem 11]{kelley_general_1975}.
For noncompact $X$ the latter is coarsest than the former.
\begin{theorem}[Myers \cite{myers_equicontinuous_1946}]\label{genAA}
    Let $C$ be the family of all continuous functions from a regular, locally compact, and Hausdorff topological space into a metric space, equipped with the compact-open topology.
    Then a subfamily $T$ of $C$ is compact if and only if
    \begin{itemize}
        \item[(1)] $T$ is closed in $C$,
        \item[(2)] $T[x] := \{ f(x); f \in T\}$ has compact closure for each $x\in X$, and
        \item[(3)] the family $T$ is equicontinuous.
    \end{itemize}
\end{theorem}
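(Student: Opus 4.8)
The result is a version of the generalized Arzel\`{a}--Ascoli theorem, and the plan is to prove it along the classical lines, whose engine is the observation that on an equicontinuous family the compact--open topology is no finer than the topology of pointwise convergence. I would first isolate two preliminary facts. \emph{(i)} If $F\subseteq C$ is equicontinuous then the topology of pointwise convergence, the topology of uniform convergence on compact sets, and the compact--open topology all coincide on $F$: given $f\in F$, a compact $K\subseteq X$ and $\epsilon>0$, use the pointwise equicontinuity of Definition \ref{equi_fam} together with the compactness of $K$ to cover $K$ by finitely many open sets centred at points $x_1,\dots,x_m$ on which every member of $F$ oscillates by less than $\epsilon/3$; then the pointwise-open set determined by the conditions $d(g(x_i),f(x_i))<\epsilon/3$ is contained in the basic compact--open neighbourhood of $f$ cut out by $K$ and $\epsilon$. \emph{(ii)} Equicontinuity is inherited by the pointwise closure: if $F$ is equicontinuous then so is its closure $\overline{F}$ in $Y^{X}$, and in particular every element of $\overline{F}$ is continuous, so $\overline{F}\subseteq C$.

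For the ``if'' direction, assume (1)--(3). By (2) and Tychonoff's theorem the product $P:=\prod_{x\in X}\overline{T[x]}$ is compact; regard $T$ as a subspace of $P$ with the pointwise topology and let $\widehat{T}$ be its closure in $P$. Then $\widehat{T}$ is compact, and by \emph{(ii)} it is equicontinuous, so $\widehat{T}\subseteq C$; by \emph{(i)} the pointwise and compact--open topologies agree on $\widehat{T}$, whence $\widehat{T}$ is compact in the compact--open topology. Since hypothesis (1) makes $T$ closed in $C$, it is closed in $\widehat{T}$, hence compact.

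For the ``only if'' direction, assume $T$ is compact in the compact--open topology. Since $Y$ is metric, hence Hausdorff, the space $C$ is Hausdorff in the compact--open topology (distinct functions differ at some point, and singletons are compact), so the compact set $T$ is closed, giving (1); and each evaluation map $e_x\colon C\to Y$ is continuous, so $T[x]=e_x(T)$ is compact and a fortiori has compact closure, giving (2). For (3), the decisive input is that local compactness (and regularity) of $X$ makes the joint evaluation $\Phi\colon C\times X\to Y$, $(f,x)\mapsto f(x)$, continuous. Fix $x_0\in X$ and $\epsilon>0$. Continuity of $\Phi$ at $(f,x_0)$, for each $f\in T$, yields a compact--open neighbourhood $W_f\ni f$ and a neighbourhood $U_f\ni x_0$ with $d(g(x),f(x_0))<\epsilon/2$ for all $g\in W_f$ and $x\in U_f$; specialising to $x=x_0$ and using the triangle inequality gives $d(g(x),g(x_0))<\epsilon$ on $W_f\times U_f$. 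Compactness of $T$ yields finitely many $W_{f_1},\dots,W_{f_k}$ covering $T$, and $U:=\bigcap_{i=1}^{k}U_{f_i}$ is then a neighbourhood of $x_0$ witnessing equicontinuity.

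The technical heart of the proof is fact \emph{(i)} together with the continuity of the joint evaluation $C\times X\to Y$; this is essentially the only place where the local compactness and regularity of $X$ are genuinely used, and keeping careful track of that dependence is the main subtlety. Since all of these are by now standard facts of general topology, an equally acceptable route is simply to cite Myers' original argument \cite{myers_equicontinuous_1946}.
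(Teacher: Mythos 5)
Your proof is correct: the sufficiency via Tychonoff together with the coincidence of the pointwise and compact--open topologies on an equicontinuous family, and the necessity via continuity of the joint evaluation map $C\times X\to Y$ for a locally compact Hausdorff domain, is exactly the standard Ascoli-type argument, and each step (including the implicit identification of compact--open and uniform-on-compacta neighbourhoods for a metric codomain) goes through. The paper itself offers no proof of Theorem \ref{genAA} --- it cites Myers \cite{myers_equicontinuous_1946} and points to Kelley \cite{kelley_general_1975}, pages 233--234 --- and your argument is essentially the one found in those sources, so there is nothing further to reconcile.
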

\par Comparing Theorem \ref{genAA} with the result originally proved by Myers in \cite{myers_equicontinuous_1946}, the hypothesis that $\overline{T[x]}$ is compact for all $x\in X$ replaces the connectedness of $X$ and completeness of $Y$; see also \cite{kelley_general_1975} pages 233-234.
\begin{theorem}[Arens \cite{arens_topology_1946}]\label{thm_Arens}
    The group of all homeomorphisms of a compact Hausdorff space is a topological group when equipped with the compact-open topology.
\end{theorem}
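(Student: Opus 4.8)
The plan is to verify directly that the composition and inversion maps of the group $G:=\mathrm{Homeo}(X)$ are continuous for the compact-open topology, which is generated by the sub-basic sets $[K,U]:=\{f\in G : f(K)\subseteq U\}$ with $K\subseteq X$ compact and $U\subseteq X$ open. Since $X$ is compact and Hausdorff it is normal --- hence locally compact --- and each of its compact subsets is closed; these are the only features of $X$ I will use. Because $G$ is already a group under composition, establishing continuity of $m\colon G\times G\to G$, $(f,g)\mapsto f\circ g$, and of $\iota\colon G\to G$, $f\mapsto f^{-1}$, is exactly what is needed.

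For composition I would argue as in the classical proof that $C(Y,Z)\times C(X,Y)\to C(X,Z)$ is continuous when the middle space is locally compact Hausdorff. Concretely, given $f,g\in G$ and a sub-basic neighbourhood $[K,W]$ of $f\circ g$, the set $g(K)$ is compact and contained in the open set $f^{-1}(W)$, so local compactness (equivalently, normality) of $X$ provides an open $V$ with $g(K)\subseteq V\subseteq\overline V\subseteq f^{-1}(W)$. Then $[K,V]$ and $[\overline V,W]$ are neighbourhoods of $g$ and $f$ respectively, and every $f'\in[\overline V,W]$, $g'\in[K,V]$ satisfies $f'(g'(K))\subseteq f'(\overline V)\subseteq W$; since the sets $[K,W]$ form a sub-basis, $m$ is continuous.

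The part carrying the real content, and where compactness of $X$ is essential, is the continuity of inversion. Here I would use the elementary observation that, for any bijection $g$ of $X$, one has $g^{-1}(K)\subseteq U$ if and only if $g(X\setminus U)\subseteq X\setminus K$. Given a sub-basic neighbourhood $[K,U]$ of $f^{-1}$, the set $X\setminus U$ is closed in the compact space $X$, hence compact, while $X\setminus K$ is open because $K$ is compact in a Hausdorff space, hence closed; thus $[X\setminus U,\,X\setminus K]$ is itself sub-basic open in $G$, and the equivalence above yields the exact identity $\iota^{-1}([K,U])=[X\setminus U,\,X\setminus K]$. Continuity of $\iota$ follows, and with it the theorem.

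I expect the only delicate point to be this last step: one must notice that compactness is invoked twice --- to ensure that $X\setminus U$ is compact and that $X\setminus K$ is open --- so that a condition on $g^{-1}$ is converted into a genuine sub-basic condition on $g$ with no approximation at all. Dropping compactness (for instance for the homeomorphism group of $\mathbb{R}^n$) breaks precisely this identity, which is why a finer topology is then required; but under the stated hypotheses the argument is forced and short.
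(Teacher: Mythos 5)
Your proof is correct, and it is essentially the standard argument underlying the cited result: the paper itself gives no proof of this statement, quoting it directly from Arens, so there is nothing in the text to compare against beyond the citation. Both halves of your verification check out. For composition you use exactly the classical interpolation: given $f\circ g\in[K,W]$, normality of the compact Hausdorff space $X$ (with $g(K)$ compact and hence closed) yields an open $V$ with $g(K)\subseteq V\subseteq\overline V\subseteq f^{-1}(W)$, and $\overline V$ is compact because $X$ is, so $[\overline V,W]\times[K,V]$ is a genuine basic neighbourhood mapping into $[K,W]$. For inversion, the identity $\iota^{-1}([K,U])=[X\setminus U,\,X\setminus K]$ is exact (in fact the set-theoretic equivalence $g^{-1}(K)\subseteq U\iff g(X\setminus U)\subseteq X\setminus K$ holds for arbitrary maps, not only bijections), and you correctly isolate the two uses of compactness that make both sides sub-basic; this is precisely the point that fails for non-compact $X$ and is why Arens' theorem needs a hypothesis of this kind. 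Two cosmetic remarks: the phrase ``it is normal --- hence locally compact'' has the implication backwards (normality does not give local compactness; both properties follow separately from compactness plus Hausdorff), though this does not affect the argument since either property suffices for the interpolation step; and one could add the one-line observation that $\iota$ is then a homeomorphism of $G$, being a continuous involution.
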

\begin{lemma}\label{con_seq_id}\rm{
    Let $G$ be a compact topological group.
    Given $g\in G$, there exists a sequence of integers $n_k\to \infty$ such that $g^{n_k}\to e$}.
\end{lemma}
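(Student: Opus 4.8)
The plan is to exploit (sequential) compactness of $G$ together with the fact that the powers of a single element commute. If $g$ has finite order, say $g^N=e$ with $N\geq 1$, then the sequence $n_k:=kN$ already does the job, so I may assume from the outset that $g$ has infinite order, i.e.\ that the powers $g^n$, $n\geq 1$, are pairwise distinct.

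First I would use compactness to extract a strictly increasing sequence of positive integers $m_1<m_2<\cdots$ along which the powers converge, say $g^{m_k}\to h$ for some $h\in G$ (the set $\{g^n:n\geq 1\}$ is infinite, hence has an accumulation point). Passing to a subsequence — possible because $m_k\to\infty$, by choosing each new index so that it exceeds the previous one by at least $k$ — I may further assume that the gaps $m_{k+1}-m_k$ tend to infinity. Now set $n_k:=m_{k+1}-m_k$, so that $n_k\to\infty$. Since all powers of $g$ commute, $g^{n_k}=g^{m_{k+1}}\cdot g^{-m_k}=g^{m_{k+1}}\cdot(g^{m_k})^{-1}$; by continuity of inversion $(g^{m_k})^{-1}\to h^{-1}$, and by joint continuity of multiplication $g^{m_{k+1}}\cdot(g^{m_k})^{-1}\to h\cdot h^{-1}=e$. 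Thus $g^{n_k}\to e$ with $n_k\to\infty$, which is the assertion. The key realization is that it is the \emph{differences} $m_{k+1}-m_k$, rather than the exponents $m_k$ themselves, that furnish the required sequence.

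The only step needing a word of justification is the extraction of a convergent subsequence of $(g^n)_n$, which is legitimate as soon as $G$ is metrizable — and metrizability holds in every situation in which this lemma is invoked below, since there $G$ is a closed subgroup of the homeomorphism group of a compact metric space carrying the (metrizable) topology of uniform convergence. For a general compact topological group the statement should be read in the net-theoretic sense: for each neighborhood $U$ of $e$ the return-time set $\{n\geq 1:g^n\in U\}$ is unbounded. This in turn follows by the same idea applied to the compact abelian subgroup $\overline{\langle g\rangle}$: covering it by finitely many translates $g^{a_1}V,\dots,g^{a_m}V$ of a symmetric neighborhood $V\subseteq U$ of $e$, and noting that for every large $N$ one has $g^N\in g^{a_i}V$ for some $i$, whence $g^{N-a_i}\in U$ with $N-a_i$ as large as desired. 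I do not anticipate any genuine obstacle here; the argument is elementary once the role of the gaps is identified.
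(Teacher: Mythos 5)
Your argument is essentially the paper's own proof: extract a convergent subsequence $(g^{m_k})$ by compactness and observe that the \emph{differences} of exponents give $g^{m_{k+1}-m_k}=g^{m_{k+1}}(g^{m_k})^{-1}\to hh^{-1}=e$ by continuity of multiplication and inversion, with the gaps arranged to tend to infinity. Your additional remark on metrizability is a welcome refinement: the paper invokes sequential compactness without comment, and you correctly note both that this suffices in the only situation where the lemma is applied (a compact subgroup of the homeomorphism group of a compact metric space) and how to phrase the general case via neighborhoods of $e$.
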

\begin{proof}
    Since $G$ is compact, given $g\in G$, the sequence $(g^n)_{n\in\mathbb{N}}$ has a convergent sub-sequence  $(g^{m_s})$. 
    Given any integer $k$, let $m_r\in \{m_s\}$ such that $n_k:=m_r-m_k>k$, $m_k\in\{m_s\}$.
    Using the continuity of the product and inversion operations of $G$, and letting $k\to \infty$, one obtains
    \begin{equation*}
        g^{n_k}=g^{m_r}g^{-m_k} \to e,
    \end{equation*}
    as desired.
    \end{proof}
\begin{theorem}\label{equi_flow_recur}
    An equicontinuous flow on a compact metric space is recurrent.
\end{theorem}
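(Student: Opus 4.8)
The plan is to realize the flow as a one-parameter subgroup of the homeomorphism group of $X$, pass to its closure to obtain a \emph{compact topological group}, and then read off the recurrence sequence from Lemma \ref{con_seq_id}.

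Since $X$ is a compact metric space, $C(X,X)$ carries the uniform metric $\rho(f,g)=\sup_{x\in X}d(f(x),g(x))$, whose topology agrees with the compact-open topology. Moreover composition is jointly continuous on $C(X,X)$: if $f_n\to f$ and $g_n\to g$ uniformly, then $\rho(f_n\circ g_n,f\circ g)\le \rho(f_n,f)+\sup_{x}d\big(f(g_n(x)),f(g(x))\big)$, and the second term tends to $0$ because $f$ is uniformly continuous on the compact space $X$. Write $T:=\{\phi^t:t\in\mathbb{R}\}$ for the set of time-$t$ maps of the flow. By hypothesis $T$ is equicontinuous, and a direct $\varepsilon/3$-argument shows the uniform closure $G:=\overline{T}$ is again equicontinuous. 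As $X$ is compact (hence regular, locally compact, Hausdorff), $G$ is closed in $C(X,X)$, and $G[x]\subseteq X$ has compact closure for each $x$, Theorem \ref{genAA} applies and $G$ is compact.

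Next I would check that $G$ is a compact topological group. Each $\phi^t$ is a homeomorphism, and $T$ is a subgroup of $\mathrm{Homeo}(X)$ since $\phi^s\circ\phi^t=\phi^{s+t}$ and $(\phi^t)^{-1}=\phi^{-t}$. Given $g\in G$, write $g=\lim_n\phi^{t_n}$; by compactness of $G$ the sequence $(\phi^{-t_n})$ has a subsequence converging to some $h\in G$, and joint continuity of composition gives $g\circ h=h\circ g=\lim\phi^{t_n}\circ\phi^{-t_n}=\mathrm{id}$. Hence every element of $G$ is a homeomorphism, so $G\subseteq \mathrm{Homeo}(X)$, and $G$ is precisely the closure of the subgroup $T$ inside $\mathrm{Homeo}(X)$ with the compact-open topology, which by Theorem \ref{thm_Arens} is a topological group. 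The closure of a subgroup of a topological group is a subgroup, so $G$ is a topological group, compact by the previous paragraph.

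Finally, apply Lemma \ref{con_seq_id} to the element $g:=\phi^1\in G$: there is a sequence of integers $n_k\to\infty$ with $g^{n_k}\to e$ in $G$. But $g^{n_k}=\phi^{n_k}$ and $e=\mathrm{id}$, so $\sup_{x\in X}d(x,xn_k)=\rho(\mathrm{id},\phi^{n_k})\to 0$. Taking $t_k:=n_k$ we satisfy Definition \ref{recur_flow}, and the flow is recurrent. The delicate point in this argument is the passage from the merely equicontinuous family $T$ to a genuine compact topological group: a uniform limit of homeomorphisms of a compact metric space need not be a homeomorphism, so the invertibility of the limit elements really uses the group structure of $T$ together with the compactness of $G$, not equicontinuity alone — this is where Theorems \ref{genAA} and \ref{thm_Arens} do the essential work.
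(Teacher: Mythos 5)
Your proposal is correct and follows essentially the same route as the paper: close the one-parameter group $\{\phi^t\}$ in the compact-open (equivalently uniform) topology, invoke Theorems \ref{genAA} and \ref{thm_Arens} to get a compact topological group, and apply Lemma \ref{con_seq_id} to produce integer times $n_k\to\infty$ with $\phi^{n_k}\to\mathrm{id}$ uniformly. The only difference is that you verify details the paper leaves implicit (equicontinuity of the closure, joint continuity of composition, and that limit elements are homeomorphisms so Arens' theorem applies), which strengthens rather than changes the argument.
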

\begin{proof}
   It the flow is periodic, there is nothing to be done.
    Suppose that the flow is not periodic. 
    Then $G=\{t\}_{t\in \mathbb{R}}$ is an equicontinuous group of homeomorphisms of $X$.
    It follows from Theorems \ref{genAA} and \ref{thm_Arens} that the closure $\overline{G}$ of $G$ in the compact open topology is a compact topological group.
    Hence, from Lemma \ref{con_seq_id}, given $t\in\mathbb{R}$, there exists a sequence $n_k\to\infty$, $n_k\in\mathbb{Z}$, such that $ t_{n_k}\to \mathrm{id}$ (with respect to the compact open topology).
    Since $X$ is compact, this implies the same sequence converges to the identity also in the uniform topology.
    In other words, $t_{n_k}\to \mathrm{id}$ uniformly, hence the flow is recurrent.
\end{proof}

\subsection{Asymptotic cycles on smooth manifolds}
\par Let $M$ be a compact manifold, and $C=C^\infty(M,\mathbb{S}^1)$ be the group of all smooth functions $f:M\to \mathbb{S}^1$, with the usual pointwise multiplication.
We consider the following subgroup of $C$  
\begin{equation*}
    R := \{f\in C; f(x)=\exp{(2\pi i H(x))}, H:M\to \mathbb{R} \text{ is smooth}\}.
\end{equation*}
Every element $f\in C$ is given locally by $f(x)=\exp 2\pi i H(x)$, where $H(x)$ is some smooth real function determined up to an additive constant.
If $H_1$ and $H_2$ are determined as above in two different coordinate systems, $dH_1$ and $dH_2$ agree in the overlapping region.
Then, for each $f\in C$, there is a closed 1-form $\omega_f$ given locally by $dH$.
Consider the local parametrization of the circle given by $\theta(\exp{(2\pi it)})=t$, and the usual volume form on $\mathbb{S}^1$ described locally by $d\theta$.
Let $\alpha:I\to M$ be a smooth curve with  $X(x)=\partial_t|_0\alpha(t)$.
If $f=\exp 2\pi i H(x)$ in a neighborhood of $\alpha(0)$, then it is easily seen that 
\begin{align*}
    f^*d\theta(X)=\omega_f(X).
\end{align*}
Therefore $\omega_f=f^*d\theta$.
In fact, the association $f\mapsto \omega_f$ give us  
\begin{equation*}
    [f]\in C/R \mapsto [\omega_f]\in H^1_{dR}(M;\mathbb{Z}),
\end{equation*}
an isomorphism. 
In what follow we consider a fixed flow $\Phi $ on $M$ induced by a smooth vector field $X$. 
\begin{definition}[Quasi-regular points]
    A point $x\in M$ is called \emph{quasi-regular} provided  
    \begin{equation*}
        \lim_{T\to\infty}1/T\int_0^Tf(xt)dt
    \end{equation*}
    exists for every real-valued continuous function defined on $M$.
    \end{definition}
\begin{remark}
    The symbol $f^{'}(x)$  will  denote the derivative of $f$ in the flow direction, that is, 
    \begin{align}\label{f linha}
        f^{'}(x)={\partial_t}|_{0}f(xt).
    \end{align}
\end{remark}
\begin{definition}
    Let $x$ be a quasi-regular point.
    Consider the functional $A_x:H^1(M;\mathbb{Z})\to \mathbb{R}$ given by
    \begin{equation*}
        A_x[ f]= \lim_{T\to \infty}\frac{1}{2\pi i T}\int_{0}^{T}  \frac{f^{'}(xt)}{f(xt)}dt.
    \end{equation*}
    The \emph{asymptotic cycle} associated with $x$ is the linear extension of $A_x$ to $H^1(M;\mathbb{R})$. 
\end{definition}
\begin{definition}
    A finite Borel measure $\mu$ on $M$ is said to be \emph{invariant} under the flow  provided 
    \begin{equation*}
    \int\displaylimits_{M} (f(xt)-f(x))d\mu(x)=0
    \end{equation*}
    for all continuous functions $f:M\to \mathbb{R}$ and all $t\in \mathbb{R}$.
\end{definition}
\begin{definition}
    The asymptotic cycle associated with an invariant measure $\mu$ is the homological class determined by the continuous current\footnote{
        See \cite[\S 18]{de_rham_varietes_1973} for more details on currents and how they can be seen as homology classes.
    } 
    $A_\mu:\mathcal{D}_1\to \mathbb{R}$, given by
    \begin{equation*}
        A_\mu(\omega)=\int\displaylimits_{M}\omega(X)d\mu.
    \end{equation*}
\end{definition}
It is easily seen that $\partial A_\mu=0$ (that is, $\partial A_\mu$ is a cycle), hence it determines an integral homological class that will be denoted too by $A_\mu$.
\begin{lemma}
    Let $\mu$ be an invariant measure under a flow.
    Consider $A^{'}_\mu:C\to \mathbb{R}$ given by
    \begin{equation*}
        A^{'}_\mu([f])=\int\displaylimits_{M}A_x[f]d\mu(x).
    \end{equation*}
    Then $A_\mu$ and the natural linear extension of $A^{'}_\mu$ to $H_{dR}^1(M;\mathbb{R})$ coincide. 
\end{lemma}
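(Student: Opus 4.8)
The strategy is to evaluate both functionals on the generators of $H^1_{dR}(M;\mathbb{Z})$, thereby reducing the asserted identity to the ``integrated'' form of Birkhoff's ergodic theorem, i.e.\ to the equality $\int_M\tilde g\,d\mu=\int_M g\,d\mu$ for the time average $\tilde g$ of a bounded function $g$.

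\emph{Step 1 (well-definedness and reduction to generators).} I would first verify that $A'_\mu$ makes sense, i.e.\ that $\mu$-almost every point is quasi-regular. Since $M$ is a compact metric space, $C^0(M,\mathbb{R})$ is separable; applying Birkhoff's ergodic theorem (in its version for flows) to each member of a countable dense family yields a set $E$ of full $\mu$-measure on which the Ces\`aro averages $\frac1T\int_0^T f(xt)\,dt$ converge as $T\to\infty$ for every $f$ in that family, and an $\varepsilon/3$-argument extends the convergence to all continuous $f$. Thus every $x\in E$ is quasi-regular, and for each $f\in C$ the function $x\mapsto A_x[f]$ is defined and bounded (by $\sup_M|\omega_f(X)|$) on $E$, so $A'_\mu$ is a well-defined functional on $C$. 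Moreover $A_x$ annihilates $R$: if $f=\exp(2\pi iH)$ with $H$ globally smooth, then $\frac1{2\pi i}\frac{f'}{f}=H'$ and $\frac1T\int_0^T H'(xt)\,dt=\frac{H(xT)-H(x)}{T}\to 0$ by compactness of $M$. Hence $A'_\mu$ descends to $C/R$ and, after linear extension, to $H^1_{dR}(M;\mathbb{R})$; because the classes $[\omega_f]$ span $H^1_{dR}(M;\mathbb{R})$ over $\mathbb{R}$, it suffices to prove $A_\mu([\omega_f])=A'_\mu([f])$ for every $f\in C$.

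\emph{Step 2 (reduction to a scalar identity).} Writing $f=\exp(2\pi iH)$ in local coordinates gives $\omega_f=dH$ and $f'(x)=2\pi i\,H'(x)f(x)$, hence the pointwise identity $\omega_f(X)=\frac1{2\pi i}\frac{f'(x)}{f(x)}=:g(x)$, a smooth real-valued function on $M$. Unwinding the definitions of $A_\mu$ and of $A'_\mu$, the equality $A_\mu([\omega_f])=A'_\mu([f])$ becomes exactly
\begin{equation*}
\int_M g\,d\mu\;=\;\int_M\Big(\lim_{T\to\infty}\tfrac1T\int_0^T g(xt)\,dt\Big)\,d\mu(x).
\end{equation*}

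\emph{Step 3 (the scalar identity).} Fix any sequence $T_n\to\infty$. The functions $x\mapsto\frac1{T_n}\int_0^{T_n}g(xt)\,dt$ are uniformly bounded by $\sup_M|g|$ and, by Step 1, converge $\mu$-a.e.\ to the right-hand integrand, so the dominated convergence theorem shows that the right-hand side equals $\lim_n\int_M\frac1{T_n}\int_0^{T_n}g(xt)\,dt\,d\mu(x)$. Fubini's theorem — legitimate because $g$ is bounded and $[0,T_n]\times M$ carries a finite product measure — turns the inner quantity into $\frac1{T_n}\int_0^{T_n}\big(\int_M g(xt)\,d\mu(x)\big)\,dt$, and each inner integral equals $\int_M g\,d\mu$ by invariance of $\mu$. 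Hence the limit is $\int_M g\,d\mu$, which proves the identity. I do not anticipate a genuine obstacle: this is classical (it is essentially Schwartzman's comparison of the two notions of asymptotic cycle), and the only points needing care — the uniform upgrade of quasi-regularity from a countable dense family to all continuous functions, and the well-definedness of $A'_\mu$ on cohomology — are handled in Step 1.
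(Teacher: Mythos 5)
Your argument is correct, and its computational core coincides with the paper's: both proofs hinge on the pointwise identity $\tfrac{1}{2\pi i}\,f'(x)/f(x)=\omega_f(X(x))$, which turns the comparison of $A'_\mu$ with $A_\mu$ into the statement that integrating the time average of $g=\omega_f(X)$ against $\mu$ gives $\int_M g\,d\mu$. The difference lies in how that averaging identity is justified. The paper simply invokes Schwartzman's theorem (the result on page 274 of his paper), which states $A'_\mu([f])=\frac{1}{2\pi i}\int_M \frac{f'(x)}{f(x)}\,d\mu$, and then concludes in one line. You instead prove that step from scratch: Birkhoff's ergodic theorem applied to a countable dense family of continuous functions gives $\mu$-a.e.\ quasi-regularity, dominated convergence lets you exchange the limit with $\int_M(\cdot)\,d\mu$, and Fubini plus invariance of $\mu$ collapses each Ces\`aro average of space integrals to $\int_M g\,d\mu$. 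Your route is more self-contained and also makes explicit two points the paper leaves to the reference — that $A'_\mu$ is well defined because a.e.\ point is quasi-regular and the integrand is bounded, and that $A_x$ annihilates $R$ so the functional descends to $C/R\cong H^1_{dR}(M;\mathbb{Z})$ and extends to $H^1_{dR}(M;\mathbb{R})$ — while the paper's version is shorter by outsourcing exactly this analytic content to Schwartzman.
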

\begin{proof}
    By \cite[Theorem on page 274]{schwartzman_asymptotic_1957}, we have 
    \begin{equation*}
        A^{'}_\mu([f])=\frac{1}{2\pi i }\int\displaylimits_{M} \frac{f^{'}(x)}{f(x)}d\mu.
    \end{equation*}
    Expressing $f$ locally by $f(x)=\exp 2\pi iH(x)$, we have 
    \begin{equation*}
        f^{'}(x)=2\pi iH_*(\partial_t|_0(xt))f(x).
    \end{equation*}
    Hence 
    \begin{equation*}
        f^{'}(x)/(2\pi i f(x))=\partial_t|_0H(xt)=\omega_f(X(x)).
    \end{equation*}
    It follows that
    \begin{equation*}
        A^{'}_\mu([f])= \int\displaylimits_{M} \omega_f(X(x))d\mu=A_\mu(\omega_f).
    \end{equation*}
\end{proof}
\begin{remark}
    Given a quasi-regular point $x$, there exists a unique measure $\mu_x$ such that 
    \begin{equation*}
        \lim_{T\to\infty}1/T\int_0^Tf(xt)dt=\int\displaylimits_{M}f(y)d\mu_x(y)
    \end{equation*}
    for every continuous function $f$ \cite[\S 2]{schwartzman_asymptotic_1957}.
    It follows that 
    \begin{equation*}
        A_x(f)=\lim_{T\to \infty}\frac{1}{2\pi iT}\int_0^T\frac{f^{'}(xt)}{f(xt)}dt=\lim_{T\to \infty}\frac{1}{ T}\int_0^T\omega_f(X)dt=
    \end{equation*}
    \begin{equation*}
        \int\displaylimits_{M}\omega_f(X)(y)d_{\mu_x}(y)=A_{\mu_x}(\omega_f),    
    \end{equation*}
    hence $A_x=A_{\mu_x}$.
\end{remark}
\begin{lemma}\label{canonicalCycleVolPreservingFlow}
    Let $M$ be a closed orientable manifold. 
    Let $\Omega$ be a volume form on $M$ and $X$ be a smooth vector field on $M$ preserving $\Omega$.
    Consider the differential form $\omega=\iota_X\Omega$ and the measure $\mu$ given by $\mu(B)=\int_B\Omega$ on the open sets $B$ of $M$. 
    Then $\mu$ is invariant under the flow determined by $X$ and $A_\mu=[\omega]$%
    \footnote{
        Here, $\omega$ is seen as a \emph{diffuse current} $\omega:\mathcal{D}_{n-1}\to\mathbb{R}$, given by $\omega(\eta)=\int\displaylimits_{M}\omega\wedge \eta$.
        }.
    In particular, if $\omega$ determines a nonzero cohomology class, then $A_\mu\neq 0$ in $H_1^{dR}(M)$.
\end{lemma}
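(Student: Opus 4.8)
The plan is to reduce the lemma to a single pointwise identity together with Poincaré duality, after first disposing of the invariance claim. First I would verify that $\mu$ is invariant: since $\mathcal{L}_X\Omega = 0$, each time-$t$ map $\Phi^t$ of the flow of $X$ is an orientation-preserving diffeomorphism of $M$ with $(\Phi^t)^*\Omega = \Omega$, so for every continuous $f\colon M\to\mathbb{R}$ and every $t$ the change-of-variables formula gives $\int_M f(xt)\,d\mu(x) = \int_M (\Phi^t)^*(f\,\Omega) = \int_M f\,\Omega = \int_M f\,d\mu$, whence $\int_M (f(xt)-f(x))\,d\mu = 0$. (Running the same computation with $Xf$ in place of $f$, together with Stokes' theorem, also re-derives $\partial A_\mu = 0$ in this situation.)

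Next I would identify the $1$-current $A_\mu$ with the diffuse current determined by $\omega$. By definition $A_\mu(\alpha) = \int_M \alpha(X)\,d\mu = \int_M \alpha(X)\,\Omega$ for a $1$-form $\alpha$, so everything comes down to the pointwise identity $\alpha(X)\,\Omega = \alpha\wedge\omega$. This I would obtain by applying the interior product $\iota_X$ to the $(n+1)$-form $\alpha\wedge\Omega$, which vanishes for degree reasons, and using the derivation rule for $\iota_X$: $0 = \iota_X(\alpha\wedge\Omega) = (\iota_X\alpha)\,\Omega - \alpha\wedge\iota_X\Omega = \alpha(X)\,\Omega - \alpha\wedge\omega$. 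Integrating yields $A_\mu(\alpha) = \int_M \alpha\wedge\omega = (-1)^{n-1}\int_M \omega\wedge\alpha$, i.e.\ $A_\mu(\alpha)$ equals the value of the diffuse current $\eta\mapsto\int_M\omega\wedge\eta$ on $\alpha$, up to the unit $(-1)^{n-1}$. Consequently $A_\mu$ and $[\omega]$ represent the same class in $H_1^{dR}(M)$ (the sign is harmless for everything below and can be absorbed into the orientation convention).

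Finally, for the ``in particular'' clause I would invoke Poincaré duality on the closed orientable manifold $M$: the pairing $H^{n-1}_{dR}(M)\times H^1_{dR}(M)\to\mathbb{R}$, $([\omega],[\alpha])\mapsto\int_M\omega\wedge\alpha$, is nondegenerate, and it is exactly this pairing that computes the value of $A_\mu = [\omega]$ on closed $1$-forms. Hence if $[\omega]\neq 0$ there is a closed $1$-form $\alpha$ with $A_\mu(\alpha) = \pm\int_M\omega\wedge\alpha \neq 0$, forcing $A_\mu\neq 0$ in $H_1^{dR}(M)$. I do not expect a genuine obstacle here: the analytic input (invariance and the $\iota_X$-identity) is routine, and the only thing requiring care is keeping the conventions for ``$A_\mu$ as a $1$-current'' and ``$\omega$ as a diffuse $(n-1)$-current'' consistent and tracking the resulting sign through Poincaré duality.
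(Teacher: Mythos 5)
Your proposal is correct, and its core is the same as the paper's: both proofs hinge on the pointwise identity $\eta(X)\,\Omega=\eta\wedge\iota_X\Omega$, obtained by contracting the vanishing $(n+1)$-form $\eta\wedge\Omega$ with $X$ (you state the graded Leibniz rule with the correct sign; the paper's displayed identity has a sign typo but reaches the same conclusion). Where you diverge is in the two auxiliary steps, and in both cases your route is a touch more direct. For invariance of $\mu$ you use $\mathcal{L}_X\Omega=0\Rightarrow(\Phi^t)^*\Omega=\Omega$ and change of variables, whereas the paper argues infinitesimally: it first shows $\int_M df(X)\,d\mu=\int_M df\wedge\omega=\int_M d(f\omega)=0$ by Stokes (using $d\omega=0$), and then integrates this in time via Fubini; your global pullback argument avoids the (implicit) step that $(Xf)\circ\Phi^s=X(f\circ\Phi^s)$ needed there, while the paper's version has the small merit of exhibiting $\partial A_\mu=0$ along the way — which you recover in your parenthetical remark, and which is genuinely needed so that $A_\mu$ defines a homology class. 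For the final clause, the paper simply cites de Rham's theorem for currents (the diffuse current $\omega$ is nonzero in current homology iff $[\omega]\neq 0$ in $H^{n-1}_{dR}$), whereas you produce, via Poincar\'e duality, a closed $1$-form $\alpha$ with $A_\mu(\alpha)=\pm\int_M\omega\wedge\alpha\neq 0$ and use only the easy fact that boundary currents annihilate closed forms; this is more elementary and makes the nondegeneracy input explicit. Your bookkeeping of the sign $(-1)^{n-1}$ between $\int\alpha\wedge\omega$ and the paper's convention $\omega(\eta)=\int_M\omega\wedge\eta$ is also more careful than the paper's, and, as you say, irrelevant to the nonvanishing conclusion.
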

\begin{proof}
    Let  $\eta$  be a 1-form on $M$.
    Using the identity  $\iota_X(\eta\wedge\Omega)=\iota_X\eta\wedge\Omega+\eta\wedge \iota_X\Omega$ and the definition of $\mu$, one has
    \begin{align*}
        \int\displaylimits_{M}\eta(X)d\mu &=\int\displaylimits_{M}\eta(X)\Omega\\
        &=\int\displaylimits_{M}\iota_X\eta\wedge\Omega \\
        &=\int\displaylimits_{M}(\iota_X(\eta\wedge\Omega)+\eta\wedge \iota_X\Omega)\\
        &=\int\displaylimits_{M}\eta\wedge\omega.
    \end{align*}
    Thus, since $d(f\omega)=df\wedge\omega-fd\omega$ and $d\omega=0$, it follows from Stokes's theorem that  
    \begin{equation*}
        \int\displaylimits_{M}df(X)d\mu=\int\displaylimits_{M}df\wedge\omega=\int\displaylimits_{M}d(f\omega)=0
    \end{equation*}
    for all $f\in C^\infty(M)$.
    Using this, we will prove that $\mu$ is invariant.
    Let $t\in \mathbb{R}$ and $f\in C^\infty(M)$.
    By Fubini's theorem
    \begin{align*}
        \int\displaylimits_{M}(f(xt)-f(x))d\mu(x)&=\int\displaylimits_{M}\left(\int_0^tdf(X(sx))ds\right)d\mu(x) \\
        &=\int_0^t\left(\int\displaylimits_{M}df(X(sx))d\mu(x)\right)ds \\
        &=0.
    \end{align*}
    Now, since $\int\displaylimits_{M}(f(xt)-f(x))d\mu(x)=0$ for every $t\in \mathbb{R}$ and $f\in C^\infty(M)$, it follows that $\mu$ is invariant.
    Therefore, $A_\mu$ is well defined and 
    \begin{equation*}        A_\mu(\eta)=\int\displaylimits_{M}\eta(X)d\mu=\int\displaylimits_{M}\eta\wedge\omega =\omega(\eta),
    \end{equation*}
    where $\omega$ is seen as a diffuse current.
    Now, by De Rham's theory of currents, the form $\omega$ determines a nonzero cohomology class if and only if, as a diffuse current, it determines a nonzero homological class. %
\end{proof}
\begin{lemma}\label{asymptotic cycles of periodic points}
    Let $M$ be a manifold, $\Phi$ be a continuous flow on $M$, and $x \in M$ a periodic point of the flow
    \footnote{
        The period function, $\lambda:M\to\mathbb{R}\cup\{\infty\}$, is defined by $\lambda(x)=\inf_{t\geq 0}\{ xt=x\}$. 
        A point is said to be periodic provided $\lambda(x)\neq \infty$.
    }.
    Then $x$ is quasi-regular and $\lambda(x) A_x=[C_x]$, where $[C_x]$ represents the integral homological class determined by the orbit of $x$ under the flow, and $\lambda$ is the period function.
\end{lemma}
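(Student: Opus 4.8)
The plan is to exploit the fact that along a periodic orbit every time-average degenerates into an average over a single period; this yields quasi-regularity for free and lets one read off $A_x$ as a winding number.

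First I would set $p:=\lambda(x)$ and let $\gamma\colon[0,p]\to M$, $\gamma(t)=xt$, be the loop traced out by the orbit of $x$, so that $[\gamma]=[C_x]\in H_1(M;\mathbb{Z})$. (If $x$ is a fixed point then $p=0$, $[C_x]=0$ and $A_x=0$, so the statement is trivial; assume henceforth $p>0$.) For any continuous $g\colon M\to\mathbb{R}$ the function $t\mapsto g(xt)$ is continuous and $p$-periodic, so writing $T=kp+s$ with $k\in\mathbb{Z}_{\geq 0}$ and $0\leq s<p$, one has
\begin{equation*}
\frac{1}{T}\int_0^T g(xt)\,dt=\frac{k}{T}\int_0^p g(xt)\,dt+\frac{1}{T}\int_{kp}^{T} g(xt)\,dt\longrightarrow\frac{1}{p}\int_0^p g(xt)\,dt
\end{equation*}
as $T\to\infty$, the last term being $O(1/T)$. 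Hence the limit in the definition of quasi-regularity exists for every continuous $g$, so $x$ is quasi-regular, with associated measure $\mu_x$ equal to the push-forward of normalized Lebesgue measure on $[0,p]$ by $\gamma$.

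Next I would compute $A_x$ on a generator $[f]\in C/R\cong H^1_{dR}(M;\mathbb{Z})$. Choose a continuous lift $H\colon\mathbb{R}\to\mathbb{R}$ of $t\mapsto f(xt)$, i.e. $f(xt)=\exp(2\pi i H(t))$; in the smooth case $H'(t)=\omega_f(X(xt))=f'(xt)/(2\pi i f(xt))$, and in general $\tfrac{1}{2\pi i}\int_0^T f'(xt)/f(xt)\,dt$ is to be read as $H(T)-H(0)$. Since $f(xp)=f(x)$, the number $m_f:=H(p)-H(0)$ is an integer, and uniqueness of lifts forces $H(t+p)=H(t)+m_f$; with the decomposition above this gives $H(T)-H(0)=km_f+O(1)$, whence
\begin{equation*}
A_x[f]=\lim_{T\to\infty}\frac{1}{2\pi i T}\int_0^T\frac{f'(xt)}{f(xt)}\,dt=\lim_{T\to\infty}\frac{H(T)-H(0)}{T}=\frac{m_f}{p}.
\end{equation*}
On the other hand $m_f$ is exactly the degree of $f\circ\gamma\colon\mathbb{S}^1\to\mathbb{S}^1$, which equals $\int_\gamma\omega_f=\langle[\omega_f],[C_x]\rangle$. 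Therefore $p\,A_x[f]=\langle[\omega_f],[C_x]\rangle$ for every $f\in C$. (In the smooth case the same conclusion follows more directly from $A_x=A_{\mu_x}$ together with $A_{\mu_x}(\eta)=\int_M\eta(X)\,d\mu_x=\tfrac1p\int_\gamma\eta$ for every closed $1$-form $\eta$, using the description of $\mu_x$ obtained above.)

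Finally I would upgrade this identity of periods into an identity of homology classes: since $[f]\mapsto[\omega_f]$ is an isomorphism onto $H^1_{dR}(M;\mathbb{Z})$ and these integral classes span $H^1_{dR}(M;\mathbb{R})$ over $\mathbb{R}$, a real $1$-homology class annihilated by every functional $\langle[\omega_f],\cdot\rangle$ must vanish; applying this to $p\,A_x-[C_x]$ yields $\lambda(x)\,A_x=[C_x]$, as claimed. The only delicate points are interpreting $f'$ when $\Phi$ is merely continuous — dealt with by the lift $H$ — and the closing duality step, which is routine de Rham/current theory; I expect neither to be a real obstacle, the heart of the lemma being the elementary collapse of the time-average onto a single period.
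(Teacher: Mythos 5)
Your proposal is correct and follows essentially the same route as the paper's proof: quasi-regularity comes from the collapse of the time average onto a single period, $A_x[f]$ is identified with $\frac{1}{\lambda(x)}\int_{C_x}\omega_f$ via the relation $f'/(2\pi i f)=\omega_f(X)$ along the orbit, and the homology identity follows by evaluating both sides on the integral classes $[\omega_f]$. Your use of a continuous lift $H$ (covering the merely continuous case) and your explicit justification of the final duality step are just slightly more careful renderings of steps the paper performs implicitly.
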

\begin{proof}
    If $x$ is a fixed point, the assertion follows.
    Suppose  $x$ is not a fixed point.
    For each smooth function $f:M\to \mathbb{R}$, it is easily seen that
    \begin{equation*}
        \lim_{t\to\infty}1/T\int_0^Tf(xt)dt=1/\lambda(x)\int_0^{\lambda(x)} f(xt)dt.
    \end{equation*}
    Therefore every periodic point $x\in M$ is quasi-regular.
    Set $\alpha:[0,T]\to M$ given by $\alpha(t)= xt$. 
    Let $f\in C$.
    Then
    \begin{align*}
        A_x[f] &= \lim_{T\to \infty}\frac{1}{2\pi i T}\int_{0}^{T}  \frac{f^{'}(xt)}{f(xt)} \\
        &= \lim_{T\to \infty}\frac{1}{2\pi i T}\int_{0}^{T}  2\pi i\left(\omega_f(\alpha^{'}(t))dt\right) \\
	& =\lim_{T\to \infty}\frac{1}{ T}\int\displaylimits_{\alpha([0,T])}\!\!\!\!\!\!\omega_f \\
	& =1/\lambda(x)\int\displaylimits_{\alpha [0,\lambda(x)]}\!\!\!\!\!\!\omega_f.
    \end{align*}        
    Parametrizing the orbit of $x$ by $\alpha$, and denoting by $[C_x]$ the integral homological class determined by the $x$-orbit, we have
	\begin{equation*}
            [C_x]([f])= \int\displaylimits_{\alpha[0,\lambda(x)]}\!\!\!\!\!\!\omega_f=\lambda(x) A_x[f].
	\end{equation*}
    Therefore, 
	\begin{equation*}
            \lambda(x) A_x= [C_x].
        \end{equation*}
\end{proof} 
\begin{theorem}\label{Schwartzman}
    A smooth flow  on a closed manifold $M$ admits a cross-section if, and only if, for every invariant measure $\mu$ the homological class determined by $A_\mu$ is nonzero.
\end{theorem}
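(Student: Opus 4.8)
The plan is to prove the two implications separately; the forward direction is essentially a computation, while the converse combines a convexity/separation argument with the classical work of Schwartzman and Fried.

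\emph{Cross-section $\Rightarrow$ nonvanishing asymptotic cycles.} Suppose $N \subset M$ is a global cross-section. Then the flow is the suspension of the first-return map of $N$ (with smooth, strictly positive return-time function), so $M$ is diffeomorphic to the mapping torus of that map; in particular there is a smooth submersion $f \colon M \to \mathbb{S}^1$ having $N$ as a fibre and strictly increasing along the orbits, so that $\omega_f(X) = df(X) > 0$ at every point. For any invariant probability measure $\mu$ we then get, directly from the definition of $A_\mu$,
\begin{equation*}
    A_\mu(\omega_f) = \int_M \omega_f(X)\, d\mu > 0,
\end{equation*}
since the integrand is a strictly positive continuous function and $\mu$ is a nonzero positive measure. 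Hence $A_\mu \neq 0$ in $H_1^{dR}(M)$, as claimed; as every (nonzero) invariant measure is a positive multiple of a probability one, this settles all invariant measures.

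\emph{Nonvanishing asymptotic cycles $\Rightarrow$ cross-section.} The set $\mathcal{M}$ of invariant Borel probability measures on $M$ is nonempty, convex and weak-$*$ compact, and $\mu \mapsto A_\mu$ is affine and continuous into the finite-dimensional space $H_1^{dR}(M)$; hence its image $K := \{A_\mu : \mu \in \mathcal{M}\}$ is a compact convex set. By hypothesis (again using that every invariant measure is a positive multiple of a probability one) $0 \notin K$, so the Hahn--Banach separation theorem yields a class $u \in H^1_{dR}(M) \cong H_1^{dR}(M)^{*}$ and a constant $c > 0$ with $\langle u, z\rangle \geq c$ for all $z \in K$. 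Since rational cohomology classes are dense in $H^1_{dR}(M)$, after a small perturbation (which keeps $\langle\,\cdot\,,z\rangle$ positive on the compact set $K$) and clearing denominators we may assume $u$ is an integral class $u_0 \in H^1(M;\mathbb{Z})$ with $\langle u_0, z\rangle > 0$ on $K$. Via the isomorphism $C/R \cong H^1(M;\mathbb{Z})$ recalled above, $u_0$ is represented by $\omega_f = f^*d\theta$ for some smooth $f \colon M \to \mathbb{S}^1$, and $A_\mu(\omega_f) = \int_M \omega_f(X)\,d\mu > 0$ for every $\mu \in \mathcal{M}$.

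It then remains to pass from an integral class that is positive on every asymptotic cycle to an honest cross-section, and here I would invoke Schwartzman's theorem \cite{schwartzman_asymptotic_1957} (see also Fried \cite{fried_geometry_1982}) directly. The mechanism is that $\omega_f$ can be modified within its de Rham class to a closed $1$-form $\omega'$ with $\omega'(X) > 0$ everywhere: the failure of this would produce, by a Hahn--Banach argument in $C(M)$ separating the open cone of strictly positive functions from the closure of the affine set $\{\,\omega_f(X) + X(h) : h \in C^\infty(M)\,\}$, a nonzero invariant measure $\mu$ with $A_\mu(\omega_f) \leq 0$, contradicting what we arranged; and then $\omega' = df'$ for a submersion $f' \colon M \to \mathbb{S}^1$, which is a fibre bundle by Ehresmann's theorem, and since $df'(X) \geq \varepsilon > 0$ on the compact manifold $M$ every orbit meets every fibre, so each fibre is the desired global cross-section. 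The main obstacle is precisely this final step: extracting the cross-section (equivalently, the positive representative) from the positivity of the class is the substantive part of Schwartzman--Fried theory, which is why the cited references are used here rather than re-derived.
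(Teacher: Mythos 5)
Your proposal is correct, and it follows the same overall skeleton as the paper's sketch (separate the asymptotic cycles from zero by Hahn--Banach, upgrade to an integral class positive on all asymptotic cycles, then invoke Schwartzman's cross-section criterion), but the implementation of the separation step is genuinely different and arguably cleaner. The paper works in the infinite-dimensional space of $1$-currents: it separates the compact convex set $\mathcal{C}$ of asymptotic cycles from the boundary subspace $\mathcal{B}_1$, uses reflexivity of the space of currents (Schwartz) to realize the separating functional as a closed $1$-form $\eta$, and then runs a Tischler-type approximation of $\eta$ by integral-period forms in $\mathcal{D}_1$, using weak compactness of the invariant measures to preserve positivity. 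You instead observe that $\mu\mapsto A_\mu$ is affine and weak-$*$ continuous into the finite-dimensional space $H_1^{dR}(M)$, so its image $K$ is compact convex with $0\notin K$, and a finite-dimensional separation plus density of rational classes in $H^1_{dR}(M)$ immediately produces the integral class; this bypasses both the reflexivity theorem and the current-level Tischler approximation. Both arguments then lean on Schwartzman's theorem (page 281) for the substantive final step of converting a positive integral class into a cross-section, and your sketch of its mechanism (perturbing $\omega_f$ within its class to make $\omega'(X)>0$, with the obstruction being a nonzero invariant measure pairing nonpositively with $[\omega_f]$, then Ehresmann) is accurate. You also supply the forward implication (cross-section $\Rightarrow$ all asymptotic cycles nonzero), which the paper's sketch omits entirely; your suspension/mapping-torus argument there is standard and correct.
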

\begin{proof}[Proof's sketch] 
    The set $\mathcal{C}=\{A_\mu; \mu \text{ invariant measure}\}$ is convex and weakly compact. 
    Since $\mathcal{B}_1$ (the boundary subspace) is  weakly-closed and $\mathcal{C}\cap \mathcal{B}_1=\emptyset$, by Hahn-Banach Theorem there exists a continuous linear functional $\varphi: \mathcal{D}_1^{'}\to\mathbb{R}$ such that 
    \begin{equation*}
        \varphi\vert_{\mathcal{C}}>0\text{ and }\phi|_{\mathcal{B}_1}=0.
    \end{equation*}
    Due to a famous theorem of Laurent Schwartz \cite{schwartzman_asymptotic_1957}, we know that the space of currents is reflexive.
    Hence, there exists an 1-form $\eta$ such that $\varphi=\eta$. 
    Therefore
    \begin{equation*}
        d\eta(\phi)=\eta(\partial\phi)=\varphi(\partial\phi)=0,
    \end{equation*}
    concluding that $\eta $ is closed.
    The positivity condition of $\varphi$ on $\mathcal{C}$ implies that   
    \begin{equation*}
        A_\mu(\eta)=\eta(A_\mu)=\varphi(A_\mu)>0
    \end{equation*}
    for every invariant measure $\mu$.
    Now, the set of all invariant measures is weakly compact.
    By  Tischler's argument \cite{tischler_fibering_1970}, $\eta$ can be approximated in $\mathcal{D}_1$ by forms with integral periods (we need a small change in the argument to obtain an approximation in $\mathcal{D}_1)$.
    It follows that there exists a form $\omega$ with integral periods such that $\omega(A_\mu)>0$ for all invariant measure $\mu$.
    Since $\omega=f^*d\theta$ for some smooth function $f:M\to \mathbb{S}$, it follows from \cite[Theorem on page 281]{schwartzman_asymptotic_1957} that the flow admits a global cross-section.
\end{proof}

\section{On $k$-forms of rank $k$}\label{sec:k-forms}
In this section we prove some preliminary (and also more general) results that will help us in the proof of Theorem \ref{pre_char} in the next section.
\begin{theorem}\label{compFolClosedForm} 
    Let $M$ be a manifold with a closed $k$-form $\omega $ of rank $k$.
    Then $\omega$ is intrinsically harmonic if, and only if, there exists a closed form $\eta$ of rank $q=n-k$ such that $\ker\omega  \cap \ker\eta  = \{0\}$.
\end{theorem}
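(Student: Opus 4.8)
The plan is to establish both implications by connecting intrinsic harmonicity to the existence of a "transverse pair" of complementary foliations, using a Riemannian metric that splits the tangent bundle accordingly. First I would recall that since $\omega$ is closed of constant rank $k$, its kernel distribution $\ker\omega$ is involutive (as shown in Section \ref{rank1_form}) and integrates to a foliation $\mathcal{F}_\omega$ of dimension $q = n-k$; likewise any closed $\eta$ of rank $q$ gives a foliation $\mathcal{F}_\eta$ of dimension $k$. The condition $\ker\omega \cap \ker\eta = \{0\}$ is exactly the statement that these two foliations are everywhere transverse and together span $TM$, i.e. $TM = \ker\omega \oplus \ker\eta$ pointwise.

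For the "if" direction, suppose such an $\eta$ exists. I would build a metric $g$ declaring the splitting $TM = \ker\omega \oplus \ker\eta$ to be orthogonal, and choosing the metric on each summand so that $\omega$ restricts to the Riemannian volume form of the leaves of $\mathcal{F}_\eta$ (the $k$-dimensional leaves on which $\omega$ is nondegenerate) and $\eta$ restricts to the Riemannian volume of the leaves of $\mathcal{F}_\omega$. Concretely, pick local frames adapted to the splitting and dual coframes; declare them orthonormal. One then checks that $\star\omega$ is, up to sign, the form $\eta$ rescaled to be "unit", or more precisely that $\star\omega$ is a closed form: the key computation is that $\star\omega$ annihilates $\ker\eta$ and is parallel along the leaves in the appropriate sense, so $d\star\omega = 0$. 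Since $\omega$ is already closed, this yields $\delta\omega = \pm\star d\star\omega = 0$, so $\omega$ is harmonic for $g$. The delicate point is arranging the metric coherently across overlapping charts so that $\star\omega$ is genuinely closed and not merely closed leafwise — one wants $\star\omega$ to agree with a fixed closed $q$-form, which is where the flatness/rescaling of $\eta$ along $\mathcal{F}_\omega$ must be used carefully; partitions of unity alone do not obviously preserve closedness of $\star\omega$.

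For the "only if" direction, suppose $\omega$ is intrinsically harmonic with respect to some metric $g$. Then $\star_g\omega$ is a closed $q$-form (closedness of $\star\omega$ is equivalent to coclosedness of $\omega$, which together with $d\omega=0$ is harmonicity). I would set $\eta := \star_g\omega$ and check two things: that $\eta$ has constant rank $q$, and that $\ker\omega \cap \ker\eta = \{0\}$. Pointwise this is linear algebra: at a point $x$, choose a $g$-orthonormal basis so that $\omega|_x = e^1 \wedge \cdots \wedge e^k$ (possible since $\omega|_x$ has rank $k$, hence is decomposable as a unit-normalized wedge after an orthogonal change of basis — this uses that a rank-$k$ form on an inner product space that is the restriction of constant rank is, in the relevant cases here, decomposable; for the general statement one may need $\omega$ locally decomposable, which holds e.g. in the product-of-$1$-forms case of interest). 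Then $\star\omega|_x = e^{k+1}\wedge\cdots\wedge e^n$, so $\ker\omega|_x = \mathrm{span}(e_{k+1},\dots,e_n)$ and $\ker\eta|_x = \mathrm{span}(e_1,\dots,e_k)$, which intersect trivially and have complementary dimensions; in particular $\eta$ has rank exactly $q$ everywhere.

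The main obstacle I anticipate is the decomposability/rank issue for $\star\omega$ in the general statement, and, in the "if" direction, the global coherence of the constructed metric so that $\star\omega$ is honestly closed rather than closed only along leaves. I expect the paper handles this by either restricting to locally decomposable $\omega$ (which covers the product-of-closed-$1$-forms case needed for Theorem \ref{main_thm}) or by invoking a local normal form for closed forms of constant rank (a Darboux-type theorem) together with a careful gluing; the honest technical heart is verifying $d\star\omega = 0$, equivalently $\delta\omega = 0$, for the glued metric.
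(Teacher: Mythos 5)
Your strategy coincides with the paper's: for the ``only if'' direction take $\eta := *_g\omega$, and for the ``if'' direction build a metric adapted to the splitting $TM=\ker\omega\oplus\ker\eta$ and force the Hodge dual of $\omega$ to coincide with the closed form $\eta$. Two comments. First, your hedge about decomposability in the ``only if'' direction is unnecessary: a $k$-form whose kernel has dimension $n-k$ is automatically decomposable at each point (pick a basis $e_1,\dots,e_n$ with $e_{k+1},\dots,e_n$ spanning the kernel; the conditions $\iota_{e_j}\omega=0$ for $j>k$ kill every monomial containing such an index, so $\omega=c\,e^1\wedge\cdots\wedge e^k$). Hence the pointwise linear algebra you describe works for any closed $k$-form of rank $k$, with no restriction to products of $1$-forms.

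Second, and more seriously, you leave what you yourself call the ``honest technical heart'' --- the verification that $d{*}\omega=0$ for the constructed metric --- unproved, and you misdiagnose the difficulty as a chart-by-chart gluing problem. There is no gluing issue, because the whole construction is global. Take any fibre metrics $g_1,g_2$ on the bundles $\ker\omega$ and $\ker\eta$ and set $g=g_1\oplus g_2$. Since ${*}_g\omega$ and $\eta$ are both $q$-forms that annihilate $\ker\eta$ and restrict to volume forms on its orthogonal complement $\ker\omega$, one has ${*}_g\omega=s\,\eta$ for a globally defined, nowhere-vanishing smooth function $s$. Now rescale only the $\ker\omega$ factor, $g_f=fg_1\oplus g_2$ with $f>0$: evaluating on the rescaled orthonormal frame gives ${*}_{g_f}\omega=f^{(k-n)/2}s\,\eta$ up to the bookkeeping in the paper, so the single global choice $f=s^{2/(k-n)}$ (after replacing $\eta$ by $-\eta$ if necessary to make $s>0$) yields ${*}_{g_f}\omega=\eta$ exactly, whence $d{*}_{g_f}\omega=d\eta=0$ and $\omega$ is harmonic for $g_f$. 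Your alternative normalization --- choosing the fibre metrics so that $\omega$ and $\eta$ are the leafwise volume elements of the two factors --- would achieve the same identity ${*}\omega=\pm\eta$, again by globally defined conformal rescalings, since $\omega\rvert_{\ker\eta}$ and $\eta\rvert_{\ker\omega}$ are nowhere-vanishing top forms on those bundles; but as written (``parallel along the leaves in the appropriate sense, so $d{\star}\omega=0$'') the decisive step is asserted rather than proved, so the proposal has a genuine gap at precisely the point on which the theorem turns, even though the route you chose is the right one.
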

\begin{proof} 
    First, let $\omega$ be a closed $k$-form of rank $k$.
    Suppose $g$ is a Riemannian metric on $M$ with respect to which $\omega$ is harmonic.
    Then the form  $\eta := *\omega$ is a closed $q$-form with the desired property.
    \par Conversely, let $\eta$ be a $q$-form whose kernel is transverse to $\ker\omega$.
    Choose Riemannian metrics $g_1,\ g_2$ on the bundles $\ker\omega$\ and $\ker\eta$ and set 
    \begin{equation*}
        g = g_1 \oplus g_2.
    \end{equation*}
    The forms $*\omega$ and $\eta$ are volume forms in $\ker\eta^{\perp}$ and have the same nullity space.
    Hence
    \begin{equation*}
        *\omega = s\eta
    \end{equation*}
    where $s: M \to \mathbb{R}$ is a smooth function.
    We have
    \begin{equation*}
        \omega \wedge *\omega =  \lvert\omega\rvert^2 \Omega \ \text{  and  } \ \omega \wedge \eta = r\Omega,
    \end{equation*}
    where $\Omega$ is the volume form of $g$, and $r: M \to \mathbb{R}$\ is a smooth function.
    It follows that
    \begin{equation*}
        \lvert\omega\rvert^2\Omega = \omega \wedge *\omega =\omega \wedge s \eta = sr\Omega.
    \end{equation*}
    Therefore $sr = \lvert\omega\rvert^2$.
    Let $\{E_i\} $ and $\{ F_j\}, i = 1, \cdots q,\ j =1, \cdots\!, k$\ be orthonormal bases for $\ker\omega,\ \ker\eta$, respectively, and let $f: M \to \mathbb{R}$ be a positive function.
    Setting $E'_i= f^{-\frac{1}{2}}E_i$, then $\{E'_i, F_j\}$\ is an orthonormal basis with respect to the metric tensor
    \begin{equation*}
        g_f = fg_i \oplus g_2.
    \end{equation*}
    Finally, it remains to find a function $f$ satisfying
    \begin{equation}\label{semiconformal}
        *_{g_f}\omega = \eta.
    \end{equation}
    Indeed, if $f$\ is such a solution,\ $\omega$\ is harmonic with respect to $g_f$,\ since $\eta$\ is closed, and we have:
    \begin{equation*}
        \omega(F_1, \cdots\!, F_k) = *_{g_f}\omega(E'_1, \cdots\!, E'_q) = \eta(E'_1, \cdots\!, E'_k) = f^{\frac{p-n}{2}}\eta(E_1, \cdots\!, E_k).
    \end{equation*}
    Thus
    \begin{equation*}
        f = (\omega(F_, \cdots\!, F_k)/\eta(E_1, \cdots\!, E_k))^{\frac{2}{k-n}} = s^{\frac{2}{k-n}}
    \end{equation*}
    is a solution of (\ref{semiconformal}), and the proof is complete.
\end{proof}
\begin{definition}
    A $k$-form $\omega$ on $M$ is said to be \emph{transitive} at $x\in M$ if $x$ is contained in a closed embedded smooth $k$-dimensional submanifold $\Sigma \subset M$, to which $\omega$ restricts to a volume form \footnote{
        In other words, the restriction $\omega\rvert_\Sigma$ is a top degree form without zeros. We usually denote this simply by writing $\omega\rvert_\Sigma>0$.
    }.
    The form $\omega$ is said to be transitive if it is transitive at every $x\in M$ where it does not vanish.
\end{definition}
\begin{lemma}(Calabi's argument)\label{Calabi's argument}
    Let $M$ be a manifold and $\omega$ be a nowhere-vanishing $k$-form on $M$.
    Suppose that every $x\in M$ for which  $\omega\rvert_x\neq 0$ is contained in a closed $k$-dimensional submanifold $\Sigma$ with normal trivial bundle and such that $\omega\rvert_\Sigma>0$.
    Then there exists a closed $(n-k)$-form $\psi$ such that $\omega\wedge\psi>0$ everywhere.
\end{lemma}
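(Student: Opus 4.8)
The plan is to build $\psi$ by patching together local models, one near each of the submanifolds supplied by the hypothesis, using the elementary fact that a locally finite \emph{sum} of closed forms is closed (whereas a partition-of-unity combination of closed forms need not be).

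First I would fix an orientation on $M$; this costs nothing, since $\omega\wedge\psi>0$ already forces $M$ to be orientable. Given $x\in M$, the hypothesis (recall $\omega$ is nowhere-vanishing, so $\omega\rvert_x\neq 0$ automatically) supplies a closed $k$-dimensional submanifold $\Sigma\ni x$ --- which we may take connected, by passing to the component through $x$ --- with trivial normal bundle and with $\omega\rvert_\Sigma$ a volume form. By the tubular neighborhood theorem and the triviality of the normal bundle there are an open set $V\subset M$ and a diffeomorphism $\phi\colon\Sigma\times\mathbb{R}^{\,n-k}\to V$ that is the identity on $\Sigma\times\{0\}=\Sigma$; write $t=(t_1,\dots,t_{n-k})$ for the Euclidean coordinate on the second factor. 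Since $\omega\rvert_\Sigma$ is nowhere zero and $\Sigma$ is compact, continuity yields $\varepsilon>0$ such that $\omega$ restricts to a nowhere-zero $k$-form on every slice $\Sigma_t:=\phi(\Sigma\times\{t\})$ with $|t|\le\varepsilon$. Choose a smooth function $\chi\colon\mathbb{R}^{n-k}\to[0,\infty)$ with support in $\{|t|\le\varepsilon\}$ and with $\chi>0$ on $\{|t|<\varepsilon/2\}$, and let $\psi_\Sigma$ be the $(n-k)$-form on $M$ equal to $\chi(t)\,dt_1\wedge\cdots\wedge dt_{n-k}$ in the chart $\phi$ and extended by zero outside $V$. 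Then $\psi_\Sigma$ is closed, since $d\chi\wedge dt_1\wedge\cdots\wedge dt_{n-k}=0$, and it is globally smooth, since its support $\phi(\Sigma\times\{|t|\le\varepsilon\})$ is compact. Evaluating $\omega\wedge\psi_\Sigma$ at a point of a slice $\Sigma_t$ on a frame made of $k$ vectors tangent to $\Sigma_t$ followed by $\partial_{t_1},\dots,\partial_{t_{n-k}}$, every term of the shuffle expansion of the wedge vanishes except the ``straight'' one --- because $\psi_\Sigma$ annihilates vectors tangent to $\Sigma_t$ --- and what remains is $\chi(t)$ times the value of the volume form $\omega\rvert_{\Sigma_t}$ on the first $k$ vectors, which is nonzero wherever $\chi(t)\neq 0$. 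Hence $\omega\wedge\psi_\Sigma$ vanishes off $V$, is nowhere zero on the open set $\phi(\Sigma\times\{|t|<\varepsilon/2\})\ni x$, and has constant sign there; replacing $\chi$ by $-\chi$ if necessary, I may assume $\omega\wedge\psi_\Sigma\ge 0$ on $M$, with strict inequality on $\phi(\Sigma\times\{|t|<\varepsilon/2\})$.

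To finish, the open sets $\phi_x(\Sigma_x\times\{|t|<\varepsilon_x/2\})$ obtained above, for $x$ ranging over $M$, cover $M$. I would extract from them a locally finite subfamily that still covers $M$ --- a finite subcover when $M$ is compact, which is the case relevant here --- with associated closed $(n-k)$-forms $\psi_1,\psi_2,\dots$, and put $\psi:=\sum_i\psi_i$. Being a locally finite sum of closed $(n-k)$-forms, $\psi$ is a closed $(n-k)$-form; and $\omega\wedge\psi=\sum_i(\omega\wedge\psi_i)$ is a sum of everywhere-nonnegative $n$-forms at least one of which is strictly positive at any prescribed point of $M$. Therefore $\omega\wedge\psi>0$ everywhere, which is what was wanted.

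The one genuine difficulty is the point already flagged: keeping the glued form closed. The naive attempt --- take local closed forms $\tilde\psi_i$ with $\omega\wedge\tilde\psi_i>0$ on all of $V_i$ and set $\psi=\sum_i\rho_i\tilde\psi_i$ for a partition of unity $\{\rho_i\}$ --- does give $\omega\wedge\psi>0$, but leaves a residue $d\psi=\sum_i d\rho_i\wedge\tilde\psi_i$ that generally fails to vanish. Working with fibrewise-compactly-supported models removes this residue, since they extend by zero and may be summed with no cutoff functions at all, while positivity survives for free because it is additive over a cover. The remaining items --- the sign choice for each $\chi$ so that the forms $\omega\wedge\psi_i$ are compatibly oriented, and the check that $\omega$ stays nondegenerate on the slices near each $\Sigma$ --- are routine.
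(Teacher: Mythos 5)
Your proof is correct and follows essentially the same route as the paper's: a tubular neighborhood coming from the trivial normal bundle, a fibrewise compactly supported model form $\chi(t)\,dt_1\wedge\cdots\wedge dt_{n-k}$ (the paper uses $h(\lvert y\rvert^2)\,dy$), closedness because the coefficient depends only on the fibre coordinate, positivity of $\omega\wedge dt$ on a slightly thickened tube by compactness of $\Sigma$, and a locally finite (finite, in the compact case) sum over tubes to obtain $\omega\wedge\psi>0$. You are in fact a bit more explicit than the paper about fixing an orientation and adjusting signs so that the local contributions are compatibly nonnegative.
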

\begin{proof}
    We generalize an argument used in the case of $1$-forms by Calabi in \cite{calabi_intrinsic_1969}.
    Let $x\in M$ be contained in a closed submanifold $\Sigma$, with normal trivial bundle.
    Let 
    \begin{equation*}
        \phi:\Sigma\times  \mathbb{D}^{n-k}\to V
    \end{equation*}
    be a diffeomorphism satisfying $\phi(x,0)=x$ for $x\in \Sigma$ (this can be obtained by trivializing the normal bundle of $\Sigma$).
    Writing $\phi=(x,y)$, we have 
    \begin{equation*}
        \omega\rvert_x(\partial x_1,\cdots\!,\partial x_p)>0
    \end{equation*}
    for $x\in \Sigma$.
    By compactness,  $\omega\wedge dy>0$ in an open set $V_0$ with $\Sigma\subset V_0\subset V$.
    Let $\epsilon>0$ be such that $V_{2\epsilon}\subset V_0$ and $h:[0,2\epsilon]\to\mathbb{R}$ be a smooth function satisfying
    \begin{equation*}
    \left\{\begin{array}{lll}
        h(t)=1 \text{ if }t<\epsilon\\
        h(t)=0 \text{ if }t\geq2\epsilon\\
        h\geq 0 \text{ everwhere}.
    \end{array}\right.
    \end{equation*}
    The differential form
    \begin{equation*}
        \eta=h(\lvert y\rvert^2)dy
    \end{equation*}
    is closed on $M$ and satisfies $\omega\wedge\eta \geq 0$, $\omega\wedge\eta>0$, where $\eta\neq 0$ and $\omega\wedge\eta=\omega\wedge dy>0$ in $V_\epsilon$.
    Consider an countable locally finite cover of $M$ by opens sets $\{V_i\}_{i\in I}$, with respective closed forms $\eta_i$ satisfying the three conditions above.
    The expression
    \begin{equation*}
        \psi=\sum_{i\in I}\eta_i	
    \end{equation*} 
    gives a well-defined closed differential form satisfying $\omega\wedge\eta>0$ in $M$. 
\end{proof}

\subsection{Intrinsically harmonic nowhere-vanishing $(n-1)$-forms}\label{sec:(n-1)-forms}
 In \cite{honda_harmonic_1996}, Honda proves that if a transitive $(n-1)$-form whose every zero is isolated is ``well-behaved'' around its zeros, then it is intrinsically harmonic. 
 Honda results make it somewhat natural to expect that a nowhere-vanishing $(n-1)$-form supporting a global cross-section must be intrinsically harmonic.
 In this section we show that this is in fact the case, effectively proving Theorem \ref{pre_char}.
 We start with the following result.
\begin{lemma}\label{trnsNwVanIH}
    Let $M$ be an orientable manifold. Suppose that $\omega$ is a nowhere-vanishing transitive closed 1-form (or $(n-1)$-form) on $M$. Then $\omega$ is intrinsically harmonic
\end{lemma}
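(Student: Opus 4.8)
The plan is to derive Lemma~\ref{trnsNwVanIH} by feeding the transitive form $\omega$ into Calabi's argument (Lemma~\ref{Calabi's argument}) to produce a complementary closed form, and then invoke the harmonicity criterion of Theorem~\ref{compFolClosedForm}. Write $k\in\{1,n-1\}$ for the degree of $\omega$ and set $q=n-k$. First I would record that $\omega$ has constant rank $k$: a nowhere-vanishing $1$-form has rank $1$ at every point, while a nowhere-vanishing closed $(n-1)$-form on an orientable manifold has constant rank $n-1$ by the Theorem of Section~\ref{rank1_form}. Thus $\omega$ is a closed $k$-form of rank $k$, and by Theorem~\ref{compFolClosedForm} it suffices to exhibit a closed $q$-form of rank $q$ whose kernel meets $\ker\omega$ only at the origin.

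To apply Calabi's argument I must first check that the submanifolds supplied by transitivity have trivial normal bundle. Given $x\in M$, since $\omega$ never vanishes, $x$ lies on a closed embedded $k$-dimensional submanifold $\Sigma$ with $\omega\rvert_\Sigma>0$; in particular $\Sigma$ is orientable. From $TM\rvert_\Sigma\cong T\Sigma\oplus N\Sigma$ together with the triviality of $\det TM\rvert_\Sigma$ and $\det T\Sigma$, the normal bundle $N\Sigma$ is orientable. When $k=n-1$ it is an orientable line bundle, hence trivial; when $k=1$ it is an orientable vector bundle over the circle $\Sigma$, hence also trivial. So the hypotheses of Lemma~\ref{Calabi's argument} hold, and we obtain a closed $q$-form $\psi$ with $\omega\wedge\psi>0$ everywhere.

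It then remains to verify that $\psi$ can play the role of the form $\eta$ in Theorem~\ref{compFolClosedForm}. It is nowhere-vanishing, since $\omega\wedge\psi>0$, and therefore has constant rank $q$ (directly if $q=1$, and by the Theorem of Section~\ref{rank1_form} if $q=n-1$). Moreover $\ker\omega\cap\ker\psi=\{0\}$ at each point: if some nonzero $v$ lay in both kernels, then $\iota_v(\omega\wedge\psi)=(\iota_v\omega)\wedge\psi\pm\omega\wedge(\iota_v\psi)=0$, contradicting the fact that the contraction of the volume form $\omega\wedge\psi$ by a nonzero vector is nonzero. Theorem~\ref{compFolClosedForm} now gives that $\omega$ is intrinsically harmonic.

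The essentially routine but not entirely cosmetic point is the triviality of the normal bundle, which has to be argued separately in codimension $1$ and in the circle case; everything else is a concatenation of Lemma~\ref{Calabi's argument} and Theorem~\ref{compFolClosedForm}, with the rank bookkeeping of Section~\ref{rank1_form} used to ensure the auxiliary form $\psi$ has the exact rank demanded by the criterion.
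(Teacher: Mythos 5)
Your proposal is correct and follows essentially the same route as the paper: feed the transitivity hypothesis into Lemma~\ref{Calabi's argument} to obtain a closed complementary form $\psi$ with $\omega\wedge\psi>0$, then conclude via Theorem~\ref{compFolClosedForm}. You simply spell out in more detail what the paper asserts briefly, namely the triviality of the normal bundles (orientable line bundle, respectively orientable bundle over a circle) and the rank and kernel-transversality checks for $\psi$.
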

\begin{proof}
    Since $M$ is an orientable manifold, any closed 1-dimensional or any closed orientable $(n-1)$-dimensional submanifold of $M$ has a trivial normal bundle in $M$.
    Thus, in the case that $\omega$ is a transitive 1-form or a transitive $(n-1)$-form, we can apply Lemma \ref{Calabi's argument} to obtain a closed form $\psi$ defined on $M$ such that $\omega\wedge\psi>0$.
    The proof is completed by applying Theorem \ref{compFolClosedForm} and observing that, in both cases, $\omega$ and $\psi$ have the correct rank.  
\end{proof}
We are now able to prove Theorem \ref{pre_char}.
\begin{proof}[Proof of Theorem \ref{pre_char}]
    Let $\omega$ be a closed nowhere-vanishing harmonic $(n-1)$-form defined on an orientable Riemaniann manifold $(M,g)$.
    Then $\omega\wedge *_g\omega$ is  a volume form on $M$, with $*_g\omega$ being a closed nowhere-vanishing 1-form.
    By Tischler's argument \cite{tischler_fibering_1970}, there is a 1-form $\eta$ with integral periods such that $\omega\wedge\eta$ is a volume form on $M$.
    Each of such $\eta$ can be written as $f^*d\theta$ for some smooth function $f:M\to\mathbb{S}^1$, where $d\theta$ is the usual volume form on $\mathbb{S}^1$.
    Since $\eta=f^*d\theta$ is nowhere-vanishing, the function $f$ is a submersion.
    Due to Ehresmann's Lemma \cite{ehresmann1947topologie,earle1967foliations}, $\mathcal{B}=\{M,f,\mathbb{S}^1, F\}$ defines a fiber bundle with fiber $F$ diffeomorphic to $f^{-1}(\theta)$, $\theta\in\mathbb{S}^1$.
    Since the flow  $\Phi$ determined by $\omega$ induces a 1-dimensional foliation transversal to the fibers of this bundle, it follows from Ehresmann's Theorem, \cite[Proposition 1, pag. 91]{camacho2013geometric}) that 
    $\mathcal{B}=\{M,f,\mathbb{S}^1\}$ is a foliated bundle, the foliation given by the orbits of $\Phi$.
    In particular, every fiber of this bundle intercepts every orbit of $\Phi$.
    In other words, $f^{-1}(\theta)$ is a global cross-section to $\Phi$ for all $\theta\in\mathbb{S}^1$ (not necessarily connected).
    \par Conversely, suppose that the flow $\Phi$  induced by $\omega$ admits a global cross-section $\Sigma$.
    We can assume $\omega\rvert_\Sigma>0$.
    Since we can slide $\Sigma$ transversely in the flow direction, it follows that $\Phi$ has a global cross-section at every point $x\in M$, and hence $\omega$ is transitive.
    Finally, applying Lemma \ref{trnsNwVanIH}, we conclude that $\omega$ is intrinsically harmonic. 
\end{proof} 

\section{A characterization of volume-preserving flows with a global cross-section}\label{sec:vol_pre_char}
\par If a closed manifold supports a smooth flow with a global cross-section, then there exists a Riemannian metric so that each orbit under the flow is a geodesic.
In this section, we provide the converse for volume-preserving Riemannian flows (cf. Theorem \ref{int_har_char2}).
What is more, the existence off the global cross-section constructed by our argument depends only on the cohomological class determined by a $(n-1)$-form, which is completely determined by the volume form and the volume-preserving flow.
\par We begin by recalling some definitions and a preliminary result due to Carri{\`e}re.
\begin{definition}Let $M$ be a manifold.
\begin{itemize}
    \item[--] A flow on $M$ is said to be \emph{geodesible} if there exists a Riemannian metric $g$ on $M$ such that every orbit under the flow is a geodesic of $g$.
    \item[--] Given a Riemannian metric $g$ on $M$, a flow on $M$ is said to be \emph{isometric} if, for all $t\in\mathbb{R}$, the function $t:M\to M$ is an isometry. 
    \item[--] A vector field $X$ on $(M,g)$ is said to be \emph{Killing} if the flow generated by $X$ is isometric. 
    \item[--] A flow on $M$ is said to be \emph{isometrizable} if there exists a Riemannian metric $g$ on $M$ such that the flow is given by a nowhere-vanishing Killing field.
\end{itemize}
\end{definition}

\begin{theorem}[Carri{\`e}re, \cite{carriere_flots_1984}]\label{thmCar}
    A nowhere-vanishing Riemannian vector field is geodesible if and only if it is isometrizable.
\end{theorem}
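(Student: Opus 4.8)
The plan is to prove the two implications separately; one is soft, and the other is where the Riemannian hypothesis enters, through a single explicit construction of a metric. Throughout I would use the standard tensorial reformulations: $X$ is Killing for $g$ iff $\mathcal{L}_X g = 0$, and (for $|X|_g=1$) $\iota_X d(X^\flat) = (\nabla^g_X X)^\flat$. I would also note at the outset that geodesibility, the Riemannian property, and isometrizability all depend only on the oriented orbit foliation, so reparametrising the flow by a positive function is harmless.

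\emph{Isometrizable $\Rightarrow$ geodesible.} Suppose $g$ is a metric for which the flow is generated by a nowhere-vanishing Killing field $Y$. Since the flow of $Y$ preserves both $g$ and $Y$, the function $|Y|_g^2 = g(Y,Y)$ is constant along the orbits, so $Y(|Y|_g^{-2}) = 0$. Hence the conformally rescaled metric $\bar g := |Y|_g^{-2}\,g$ still satisfies $\mathcal{L}_Y \bar g = Y(|Y|_g^{-2})\,g + |Y|_g^{-2}\mathcal{L}_Y g = 0$, and now $Y$ has unit $\bar g$-length. For a unit Killing field one has $\nabla^{\bar g}_Y Y = -\tfrac12\operatorname{grad}^{\bar g}|Y|_{\bar g}^2 = 0$, so the $Y$-orbits, which are exactly the orbits of the flow, are $\bar g$-geodesics; the flow is geodesible.

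\emph{Geodesible $+$ Riemannian $\Rightarrow$ isometrizable.} Here I would extract two tensors. From geodesibility: choose a metric $g_1$ whose orbits are geodesics, reparametrise the flow to unit $g_1$-speed, and let $\chi$ be the $g_1$-dual $1$-form of the unit generator $X$. Then $\chi(X)\equiv 1$, and since $\nabla^{g_1}_X X = 0$ we get $\iota_X d\chi = (\nabla^{g_1}_X X)^\flat = 0$, hence $\mathcal{L}_X \chi = \iota_X d\chi + d(\chi(X)) = 0$. From the Riemannian hypothesis: the orbit foliation carries a holonomy-invariant transverse metric, which pulled back to $M$ is a symmetric positive-semidefinite $(0,2)$-tensor $h$ with $\ker h = \mathbb{R}X$ and $\mathcal{L}_X h = 0$ (this survives the reparametrisation, since $\iota_X h = 0$ forces $\mathcal{L}_{fX} h = f\,\mathcal{L}_X h$). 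Now set
\[
g := h + \chi \otimes \chi .
\]
Decomposing $v = \chi(v)X + w$ with $w \in \ker\chi$ gives $g(v,v) = h(w,w) + \chi(v)^2$, which is positive for $v\neq 0$ because $h$ is positive definite on the complement $\ker\chi$ of $\ker h = \mathbb{R}X$; so $g$ is a genuine Riemannian metric, with $g(X,X)=1$ and $g(X,\cdot)|_{\ker\chi}=0$, i.e. $X$ is a unit field and $X^\flat = \chi$. Finally
\[
\mathcal{L}_X g = \mathcal{L}_X h + (\mathcal{L}_X\chi)\otimes\chi + \chi\otimes(\mathcal{L}_X\chi) = 0 ,
\]
so $X$ is a Killing field for $g$ (and $g$ is in fact bundle-like), and the flow is isometrizable.

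\emph{Main obstacle.} The conceptual weight is entirely in the Riemannian step: converting the hypothesis ``$X$ is a Riemannian vector field'' into the global invariant semi-metric $h$ with kernel \emph{exactly} $\mathbb{R}X$, and bookkeeping the reparametrisation so that both $\mathcal{L}_X\chi = 0$ and $\mathcal{L}_X h = 0$ hold simultaneously for the \emph{same} generator. Once those two invariant tensors are in hand on the nose, the metric $h + \chi\otimes\chi$ does all the work, and verifying positive-definiteness and $\mathcal{L}_X$-invariance is just pointwise linear algebra.
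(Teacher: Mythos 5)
You should first note that the paper contains no proof of Theorem~\ref{thmCar} at all: it is imported verbatim from Carri\`ere's \emph{Flots riemanniens} and used as a black box, so there is no internal argument to compare against. Judged on its own, your proof is correct and is essentially the standard argument for Carri\`ere's lemma. The soft direction (conformally rescale by $|Y|_g^{-2}$, note $Y$ stays Killing and becomes unit, and use $\nabla_YY=-\tfrac12\,\mathrm{grad}\,|Y|^2=0$) is fine and, as you observe implicitly, does not use the Riemannian hypothesis. In the main direction you combine the Gluck--Sullivan/Wadsley data extracted from geodesibility (the $1$-form $\chi=X^\flat_{g_1}$ of the unit reparametrisation, with $\chi(X)=1$ and $\iota_Xd\chi=(\nabla^{g_1}_XX)^\flat=0$, hence $\mathcal{L}_X\chi=0$) with the tensorial form of the Riemannian hypothesis (a positive-semidefinite $h$ with $\ker h=\mathbb{R}X$ and $\mathcal{L}_Xh=0$, which is the standard definition of a holonomy-invariant transverse metric, so invoking it is legitimate), and the verifications for $g=h+\chi\otimes\chi$ --- positive definiteness via $\ker h\cap\ker\chi=\{0\}$, $X^\flat_g=\chi$, $\mathcal{L}_Xg=0$, and the scaling identity $\mathcal{L}_{fX}h=f\,\mathcal{L}_Xh$ when $\iota_Xh=0$ --- are all correct.

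One point deserves to be stated more carefully than your preamble does: what your construction makes Killing is the unit-speed reparametrisation $X$, not the originally given generator, and no proof can do better. On $\mathbb{T}^2$ the field $Y=f(y)\partial_x$ with $f>0$ nonconstant generates a Riemannian foliation whose orbits are geodesics of the flat metric, yet $Y$ itself is Killing for no metric, since its flow is a shear, hence not equicontinuous, whereas the isometry group of a compact manifold is compact. So ``isometrizable'' in Theorem~\ref{thmCar} must be read, as you do, at the level of the oriented orbit foliation (some nowhere-vanishing Killing field spans the flow direction); under that reading your opening remark that the notion is reparametrisation-invariant is immediate, and it is the only reading under which the statement is true. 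It is worth making this explicit rather than treating it as an innocuous normalisation, particularly because the paper later quotes the theorem for a specific generator in Theorem~\ref{int_har_char2}.
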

\begin{theorem}\label{int_har_char2}
    Let $\omega$ be a nowhere-vanishing closed $(n-1)$-form on an orientable closed manifold $M$, and $X$ be the unique volume preserving field generating the foliation $\mathcal{F} $ induced by $\omega$.
    Suppose $[\omega]\neq 0$ in $H^{n-1}_{dR}(M)$.
    The following are equivalent.
    \begin{itemize}
        \item[(1)] $X$ is  Riemannian and geodesible;
        \item[(2)] $X$ is Killing for some Riemannian metric on $M$;
        \item[(3)] $\omega$ is intrinsically harmonic. 
    \end{itemize}
\end{theorem}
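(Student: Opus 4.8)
The plan is to derive $(1)\Leftrightarrow(2)$ directly from Carrière's Theorem \ref{thmCar}, and then to close the loop with $(2)\Rightarrow(3)$ and $(3)\Rightarrow(1)$, with essentially all of the weight carried by $(2)\Rightarrow(3)$ — which is also the implication that will be fed into the proof of Theorem \ref{main_thm}. For $(1)\Leftrightarrow(2)$: if $X$ is Killing for a metric $g$, then $g$ is flow-invariant, so $X$ is a Riemannian vector field, and Theorem \ref{thmCar} makes it geodesible, giving $(2)\Rightarrow(1)$; conversely, Theorem \ref{thmCar} asserts that a nowhere-vanishing Riemannian geodesible vector field is isometrizable, which is literally the content of $(2)$, giving $(1)\Rightarrow(2)$.

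For $(2)\Rightarrow(3)$, fix a metric $g$ for which $X$ is Killing and let $d$ be the induced distance. Each time-$t$ map of the flow $\Phi$ is a $d$-isometry, so the family $\{\Phi^t\}_{t\in\mathbb{R}}$ is equicontinuous, and by Theorem \ref{equi_flow_recur} the flow is recurrent; fix $t_n\to\infty$ with $\sup_{x\in M}d(x,xt_n)\to 0$. Let $\Omega$ be the volume form determined by $X$ and $\mu$ the associated measure; by Lemma \ref{canonicalCycleVolPreservingFlow} the measure $\mu$ is invariant and $A_\mu=[\omega]$, which is nonzero by hypothesis. The crucial step is to deduce from recurrence that $A_{\mu'}\neq 0$ for \emph{every} invariant probability measure $\mu'$. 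For this one closes each flow arc $s\mapsto xs$, $s\in[0,t_n]$, into a loop using a path of length $<\sup_x d(x,xt_n)$, and checks that the resulting homology classes, after division by $t_n$, converge as $n\to\infty$ to a single class $A\in H_1^{dR}(M)$ independently of $x$; pairing with closed $1$-forms and integrating against $\mu'$ then shows $A_{\mu'}=A$ for every invariant probability measure. (On measures carried by periodic orbits this identification is Lemma \ref{asymptotic cycles of periodic points}, and the hypothesis $[\omega]\neq 0$ rules out homologically trivial closed orbits.) Since $A=A_\mu\neq 0$, Theorem \ref{Schwartzman} yields a global cross-section for $\Phi$, and Theorem \ref{pre_char} then gives that $\omega$ is intrinsically harmonic.

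For $(3)\Rightarrow(1)$: by Theorem \ref{pre_char} the flow of $X$ admits a global cross-section, and a flow carrying a global cross-section is geodesible, as recalled at the start of this section. (Alternatively, geodesibility can be read off directly: if $\omega=\iota_X\Omega$ is harmonic for a metric $g$, then $*_g\omega$ is a closed nowhere-vanishing $1$-form, and a suitable sign of it is a characteristic $1$-form for $X$, exhibiting geodesibility without appeal to Theorem \ref{pre_char}.) With geodesibility in hand, and $X$ being Riemannian — which is the remaining ingredient of $(1)$ and the subtle point in this direction — Carrière's Theorem \ref{thmCar} promotes geodesibility to isometrizability, so that $(1)$ holds (equivalently, one lands in $(2)$ and invokes $(1)\Leftrightarrow(2)$).

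The step I expect to be the main obstacle is the middle of $(2)\Rightarrow(3)$: passing from ``one invariant measure has nonzero asymptotic cycle'' to ``all of them do.'' Everything else is an assembly of results already available (Carrière, Schwartzman, Theorem \ref{pre_char}, Theorem \ref{equi_flow_recur}); it is the uniqueness-up-to-scaling of the asymptotic cycle of a recurrent flow, together with the careful treatment of measures supported on periodic orbits and the use of $[\omega]\neq 0$ to exclude the degenerate case, that requires the genuine work, and this is precisely where the strengthening from Poincaré recurrence to uniform recurrence is exploited.
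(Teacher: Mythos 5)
Your treatment of $(1)\Leftrightarrow(2)$ and the overall architecture of $(2)\Rightarrow(3)$ (equicontinuity, Theorem \ref{equi_flow_recur}, Lemma \ref{canonicalCycleVolPreservingFlow}, Theorem \ref{Schwartzman}, Theorem \ref{pre_char}) match the paper. The genuine gap is in $(3)\Rightarrow(1)$: you prove geodesibility of $X$ but never prove that $X$ is \emph{Riemannian}, which is the other half of statement $(1)$ and is also a prerequisite for invoking Carri\`ere's Theorem \ref{thmCar} (that theorem only upgrades geodesible to isometrizable for fields that are already Riemannian, so you cannot ``land in $(2)$'' without it). You explicitly flag Riemannian-ness as ``the subtle point'' and then supply no argument, and it is not automatic: having a global cross-section, or a closed $1$-form $\eta$ with $\eta(X)=1$, does not by itself yield a holonomy-invariant transverse metric (suspensions of diffeomorphisms preserving no metric on the fibre have cross-sections but are not Riemannian flows). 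The paper spends the second half of its proof exactly here: taking $h$ with $\eta=*_h\omega$ closed and normalizing $\eta(X)=1$, it gets $\mathcal{L}_X\eta=\iota_X\mathrm{d}\eta+\mathrm{d}(\iota_X\eta)=0$, so the flow preserves the codimension-one foliation $\ker\eta$; it then covers $M$ by flow boxes $I_\alpha\times P_\alpha$ ($P_\alpha$ a plaque transverse to the flow) on which the flow is translation in the $I_\alpha$-factor, pushes forward product metrics, and patches them with a partition of unity to obtain a metric making the flow Riemannian. Some such construction (or another proof that $X$ is Riemannian) must be added before your $(3)\Rightarrow(1)$ is complete; the geodesibility part, via the closed characteristic form $*_h\omega$, does agree with the paper's Rummler-criterion argument.

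A smaller point on $(2)\Rightarrow(3)$: the step you single out as the main obstacle --- passing from $A_\mu=[\omega]\neq 0$ for the volume measure to nonvanishing of $A_{\mu'}$ for \emph{every} invariant measure --- is precisely Schwartzman's theorem that a (uniformly) recurrent flow has a single asymptotic cycle, and the paper simply cites \cite[\S 6]{schwartzman_asymptotic_1957}. Your sketch (closing the arcs $s\mapsto xs$, $s\in[0,t_n]$, by short paths and claiming the normalized classes converge to a class independent of $x$) states the right mechanism, but the independence-of-$x$ of the limit is exactly the nontrivial content and is left unargued; moreover the parenthetical that ``$[\omega]\neq 0$ rules out homologically trivial closed orbits'' has the logic reversed --- it is the uniqueness of the asymptotic cycle together with $A_\mu\neq 0$ that forces periodic orbits to be homologically nontrivial, not an input to the argument. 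Either cite Schwartzman's recurrence result, as the paper does, or write out that convergence argument in full.
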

\begin{proof}
    Note that the equivalence between assertions (1) and (2) is simply Carri{\`e}re's result (Theorem \ref{thmCar}).
    Suppose now that (2) holds, so that there is on $M$ a metric tensor $g$ inducing a metric $d$ with respect to which the flow induced by $X$ is equicontinuous.
    Now, Theorem \ref{equi_flow_recur} then ensures that flow is a recurrent flow. 
    We know, due to a result of Schwartzman's \cite[§ 6]{schwartzman_asymptotic_1957}, that a recurrent flow has only one asymptotic cycle.
    Since the class of the closed $(n-1)$-form $\omega:= \iota_X\Omega$ induced by $X$ is an asymptotic cycle (cf. Lemma \ref{canonicalCycleVolPreservingFlow}), and since $[\omega] \neq 0$ by assumption, it follows from Schwartzman's results that the flow admits a global cross-section.
    Finally, applying Theorem \ref{pre_char} we can conclude that (2) implies (3).
    \par Conversely, suppose now that $\omega$ is intrinsically harmonic.
    Let $h$ be a Riemannian metric on $M$ such that $\eta=*_h\omega$ is a closed form.
    Let $g$ be a Riemannian metric on $M$ such that $g(X,X)=1$ and $\ker\eta$ is orthogonal to $X$.
    The $1$-form $\eta$ is the characteristic form (with regard to $g$) of the foliation $\mathcal{F} $.
    It then follows from Rummler's criterion (cf. \cite[Proposition 1]{rummler_quelques_1979}) that the leaves of $\mathcal{F} $ are \emph{minimal submanifolds} of $(M,g)$.
    It is well-known that a minimal $1$-dimensional manifold becomes a geodesic when properly parameterized (see the proof Theorem \ref{riem_geod_flow} below for a more detailed account of this fact). 
    Hence the flow lines of $X$ are traces of geodesics, that is, $X$ is geodesible.
    \par Suppose further that the generator field $X$ is normalized to satisfy $\eta(X)=1$. 
    Then 
    \begin{equation*}
        \mathcal{L}_X\eta=\iota_X\underbrace{(\mathrm{d}\eta)}_{0}+\mathrm{d}\underbrace{(\iota_X\eta)}_{1}=0,
    \end{equation*}
    and therefore the flow of $X$ preserve the leaves of the codimension $1$ foliation $\mathcal{E}$ induced by $\ker\eta$, that is, it carries leaves into leaves. 
    Now, by construction, the foliation $\mathcal{F} $ (that is, the unparameterized flow of $X$) is orthogonal to the foliation $\mathcal{E}$.
    Using foliated charts, we can find a covering of $M$ by open sets $U_\alpha$ with the following property: for each $\alpha$ there exists an interval $I_\alpha$ containing $0$, and a plaque of the foliation $\mathcal{F} $ contained in $U_\alpha$, such that the mapping 
    \begin{align*}
        \Psi:I_\alpha\times P_\alpha &\to U_\alpha \\
        (t,x) &\mapsto xt
    \end{align*}
    is a diffeomorphism.
    For any product metric on $I_\alpha\times P_\alpha$, the foliation induced by $\partial_t$ is Riemannian.
    We fix such a metric, say $h$, and let $g_\alpha :=(\Psi^{-1})^*h$.
    Since $\Psi_*(\partial_t)=X$, it follows that $\Phi$ is Riemannian with respect to $g_\alpha$.
    Finally, we consider a partition of the unity $\{\lambda_\alpha\}$ subordinate to $\{U_\alpha\}$ and set 
    \begin{equation*}
        g :=\sum\lambda_\alpha g_\alpha.
    \end{equation*}
    With respect to $g$, the flow $\Phi$ is Riemannian.
    Thus $X$ is Riemannian and geodesible, and we see that (3) implies (1), completing the proof. 
    \end{proof}
\begin{remark}\label{counterexamples}
    Equicontinuity plays an important role in the proof of Theorem \ref{int_har_char2}. 
    One could ask if the properties of being geodesible or volume-preserving could, alone, imply the desired equicontinuity. 
    The answer is \emph{no}, and the counterexample comes from expansive geodesic flows. 
    \par Informally, an expansive flow is one for which distinct orbits grow apart.
    To formalize this, we ask that, given distinct points $x,y$, any reparameterization of the $y$ orbit always ends up leaving a transverse ball of fixed radius around the orbit of $x$ after finite time (cf. \cite{paternain_expansive_1993} for a precise definition).
    \par It is not hard to see that an expansive flow can never be equicontinuous.
    It is known, due to the work of Paternain \cite[Corollary 2]{paternain_expansive_1993}, that if a $2$-dimensional closed Riemannian manifold has no conjugated points and its Riemannian covering has no bi-asymptotic geodesics, then its geodesic flow is expansive.
    On the other hand, the \emph{geodesic flow is geodesible}: its flow lines are exactly the geodesics of the Sasaki metric on the tangent bundle.
    Thus, this entire class of $2$-manifolds provides examples of geodesible flows that are not equicontinuous.
    The same geodesic flows provide counterexamples to the volume-preserving case since the geodesic flow preserves the Riemannian volume of the corresponding metric \cite[Chapter III, Exercise 14]{do_carmo_riemannian_1992}. 
    \par Yet another class of examples of geodesible flows which are not recurrent is given by pointwise periodic flows which are not globally periodic but have a bounded period function.
    These are geodesible, due to a result of Wedsley's \cite{wadsley1975geodesic},  but can't be equicontinuous as a consequence of Theorem 1 in \cite{francca2023pointwise}.
\end{remark}

\section{A characterization of the $n$-torus}\label{sec:tor_char}
\par Let $M$ be a closed $n$-dimensional manifold supporting a set of $n-1$ linearly independent (in particular, nonvanishing) closed $1$-forms $\lambda_i$.
We suppose as well that 
\begin{equation*}
    \omega := \lambda_1 \wedge \cdots \wedge \lambda_{n-1}
\end{equation*}
determines a nonzero cohomological class. 
We use the same notation as in Section \ref{rank1_form}.
In particular, $M$ is orientable, has a volume form $\Omega$, and $X_0$ and $\lambda_0$ are, respectively, the unique vector field and $1$-form induced on $M$ by $\omega$ satisfying the relations $\lambda_0\wedge\omega = \Omega$ and $\iota_{X_0}\Omega = \omega$.
Moreover, $\{X_0, \cdots\!, X_{n-1}\}$ is a global frame satisfying $\lambda_i(X_j) = \delta_{ij}$.
\begin{theorem}\label{riem_geod_flow}
    $M$ supports a Riemannian metric tensor $g$ with respect to which $X_0$ is geodesible and Riemannian. 
\end{theorem}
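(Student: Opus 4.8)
The goal is to construct a metric $g$ on $M$ with respect to which $X_0$ is both Riemannian (its flow is foliate with respect to some transverse metric, equivalently the flow maps leaves of a complementary foliation to leaves) and geodesible (every flow line is a geodesic). The natural strategy is to first build a metric making the flow lines geodesics, using the fact that the complementary foliation is given by the product $\lambda_1\wedge\cdots\wedge\lambda_{n-1}$, and then show that this same metric — or a suitable modification of it — makes the flow Riemannian. I would define $g$ directly by declaring the global frame $\{X_0, X_1, \dots, X_{n-1}\}$ to be orthonormal. That is, let $g = \sum_{i=0}^{n-1}\lambda_i\otimes\lambda_i$. This is a genuine Riemannian metric since the $\lambda_i$ form a coframe, and it makes $\|X_0\|_g = 1$ everywhere.

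\textbf{Geodesibility.} With $g$ as above, I would verify that $X_0$ is geodesible by checking that the $1$-form $\lambda_0$ dual to $X_0$ is the characteristic form of the foliation $\mathcal{F}$ (the unparameterized flow of $X_0$) and that the leaves of $\mathcal{F}$ are minimal submanifolds, invoking Rummler's criterion exactly as in the proof of Theorem \ref{int_har_char2}. Concretely, since $\ker\lambda_0 = \mathrm{span}\{X_1,\dots,X_{n-1}\}$ is the $g$-orthogonal complement of $X_0$, the form $\lambda_0$ is the characteristic form of $\mathcal{F}$. For a $1$-dimensional foliation, Rummler's condition for minimality of the leaves reduces to $\iota_{X_0}\,d\lambda_0 = 0$, i.e. $\mathcal{L}_{X_0}\lambda_0 = d(\iota_{X_0}\lambda_0) + \iota_{X_0}d\lambda_0 = \iota_{X_0}d\lambda_0$ should vanish. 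This need not hold for the naive $g$, so I would instead appeal to the computation in Remark \ref{lambda0notclosed}: the failure of $\lambda_0$ to be closed is controlled, $d\lambda_0(N, X_0) = f$ with $[N, X_0] = fX_0$, and since $\mathcal{L}_{X_0}\Omega = 0$ one can show $d\lambda_0(\cdot, X_0)$ is actually zero — indeed $\mathcal{L}_{X_0}\Omega = \mathcal{L}_{X_0}(\lambda_0\wedge\omega) = (\mathcal{L}_{X_0}\lambda_0)\wedge\omega$ (using $\mathcal{L}_{X_0}\omega = d\iota_{X_0}\omega + \iota_{X_0}d\omega = 0$), so $(\mathcal{L}_{X_0}\lambda_0)\wedge\omega = 0$, and since $\mathcal{L}_{X_0}\lambda_0 = \iota_{X_0}d\lambda_0$ annihilates $X_0$, this forces $\iota_{X_0}d\lambda_0 = 0$. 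Hence the leaves of $\mathcal{F}$ are minimal, and a minimal $1$-dimensional submanifold is a geodesic once parameterized by $g$-arclength; since $\|X_0\|_g = 1$, the flow lines of $X_0$ are already unit-speed geodesics.

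\textbf{The Riemannian property.} For this I would argue as in the last paragraph of the proof of Theorem \ref{int_har_char2}: the foliate fields $X_1,\dots,X_{n-1}$ (shown in Remark \ref{lambda0notclosed} to satisfy $[X_0, X_i]\in\ker\omega$) together with the flow structure let one cover $M$ by flow boxes $\Psi_\alpha: I_\alpha\times P_\alpha\to U_\alpha$, $\Psi_\alpha(t,x) = xt$, which are diffeomorphisms with $(\Psi_\alpha)_*\partial_t = X_0$. On each box I would pull back a product metric so that the flow becomes Riemannian there, then glue with a partition of unity $\{\lambda_\alpha\}$ subordinate to $\{U_\alpha\}$, setting $g := \sum\lambda_\alpha g_\alpha$, which keeps the flow Riemannian. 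The one subtlety is reconciling the two constructions: I would either (i) observe that being Riemannian and being geodesible are both preserved under the rescaling $g\mapsto f^2\,\lambda_0\otimes\lambda_0 \oplus g|_{\ker\lambda_0}$ and choose $f$ to simultaneously normalize, or (ii) more cleanly, run the flow-box construction from the start but arrange the product metrics so that $\partial_t$ has unit length, giving geodesibility for free, and then note that the minimality/geodesibility argument above is metric-independent in the relevant sense.

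\textbf{Main obstacle.} The delicate point is ensuring a \emph{single} metric achieves both properties at once, rather than having one metric for geodesibility and another for the Riemannian condition. I expect the cleanest route is the flow-box argument of Theorem \ref{int_har_char2} with the product metrics chosen to make $\partial_t$ unit-speed, so that after gluing, $X_0$ has constant unit length and its orbits are geodesics, while the Riemannian (foliate-transverse-metric) property descends from the product structure on each box. Verifying that the partition-of-unity sum does not destroy the unit-speed/geodesic property — which requires that all the $g_\alpha$ agree on $\mathrm{span}\{X_0\}$, giving $g(X_0,X_0)=1$ identically — is the main thing to get right.
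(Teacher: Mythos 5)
Your outline follows the paper's own route: take the metric $g=\sum_{i=0}^{n-1}\lambda_i\otimes\lambda_i$ making the frame $\{X_0,\dots,X_{n-1}\}$ orthonormal, get the Riemannian property from flow-invariance of the transverse part, and get geodesibility from minimality of the orbits (Rummler). The genuine gap is the step where you claim $\iota_{X_0}\mathrm{d}\lambda_0=0$. The derivation you give is a non sequitur: for \emph{any} $1$-form $\beta$ one has $\beta\wedge\omega=\beta(X_0)\,\Omega$ (expand $\beta$ in the coframe $\{\lambda_0,\dots,\lambda_{n-1}\}$; every term involving some $\lambda_i$, $i\geq 1$, is killed by wedging with $\omega=\lambda_1\wedge\cdots\wedge\lambda_{n-1}$). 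Hence ``$(\mathcal{L}_{X_0}\lambda_0)\wedge\omega=0$'' is \emph{equivalent to}, not independent of, the fact that $\mathcal{L}_{X_0}\lambda_0=\iota_{X_0}\mathrm{d}\lambda_0$ annihilates $X_0$; combining the two conditions gives no new information and cannot force $\iota_{X_0}\mathrm{d}\lambda_0=0$. Indeed the identity fails in general: $\iota_{X_0}\mathrm{d}\lambda_0(X_i)=-f_i$ where $[X_0,X_i]=f_iX_0$, and Remark \ref{lambda0notclosed} is precisely the observation that these $f_i$ need not vanish. Concretely, on $\mathbb{T}^2$ with $\lambda_1=dx$ and the legitimate choice $X_0=e^{u(x,y)}\partial_y$ (any positive section of $\ker\omega$ is allowed), one has $\lambda_0=e^{-u}dy$ and $\iota_{X_0}\mathrm{d}\lambda_0=u_x\,dx\neq 0$ whenever $u_x\neq 0$, and for the corresponding $g$ the orbits are not geodesics, since $g(\nabla_{X_0}X_0,X_1)=-f_1\neq 0$. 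So the vanishing of the mean curvature — which is exactly what the bulk of the paper's proof is devoted to, via the Weingarten map, the identity $\operatorname{tr}W^N=\pm\,\mathrm{d}\lambda_0(N,X_0)$, and the foliate property of normal fields — is the point your proposal does not actually establish; your shortcut through $\mathcal{L}_{X_0}\Omega=0$ bypasses rather than proves it.

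Your fallback options do not repair this. Gluing local product metrics by a partition of unity, even if all $g_\alpha$ agree on $\mathrm{span}\{X_0\}$, only yields $g(X_0,X_0)\equiv 1$, i.e.\ unit speed; being a geodesic requires $\nabla_{X_0}X_0=0$, equivalently that the $g$-orthogonal hyperplane field of $X_0$ be flow-invariant ($\iota_{X_0}\mathrm{d}\lambda_0^g=0$ for the $g$-dual form $\lambda_0^g$), and this condition is not preserved under convex combinations $\sum\lambda_\alpha g_\alpha$ of metrics, nor is ``metric-independence in the relevant sense'' available — minimality is a metric-dependent statement. The proposed rescaling $g\mapsto f^2\lambda_0\otimes\lambda_0\oplus g\rvert_{\ker\lambda_0}$ likewise changes $\mathrm{d}\lambda_0^g$ and does not obviously preserve geodesibility. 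For the Riemannian half, note that the direct computation $(\mathcal{L}_{X_0}g)(X_i,X_j)=0$ for the naive metric (as in the paper) is both simpler and safer than the flow-box gluing, since partition-of-unity combinations of bundle-like metrics need not be bundle-like (the cutoff functions are not basic). As written, then, the proposal secures the Riemannian property only modulo that caveat and leaves the geodesibility half open.
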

\begin{proof}
    As noted in Remark \ref{lambda0notclosed}, each one of the fields $X_i$ is foliate.
    This means, in other worlds, that there are $n-1$ functions $f_i: M \to \mathbb{R}$ such that $[X_0, X_i] = f_iX_0$, for $i = 1, \cdots\!, n-1$.
    We define $f_0 \equiv 0$ so that $[X_0, X_0] = f_0X_0$ as well.
    \par We now define on $M$ a metric tensor $g$ by imposing orthonormality to the global frame $\{X_0, X_1, \cdots\!, X_{n-1}\}$, and extending it linearly to the whole tangent bundle.
    Another way to describe $g$ is 
    \begin{equation*}
        g = \sum_{i=0}^{n-1}\lambda_i\otimes\lambda_i.
    \end{equation*}
    So that each $1$-form is the dual of the corresponding vector field $X_i$ with respect to $g$.
    The metric $g$ is thus as smooth as the fields $X_i$, and using this metric the normal bundle of $\mathcal{F} $, that is, the distribution 
    \begin{equation*}
        Q_x := ~^{\textstyle \mathrm{T}_xM}\!\big/_{\textstyle \mathrm{T}_x\mathcal{F} }
    \end{equation*}
    can be identified with the normal bundle of $X_0$, that is, the distribution 
    \begin{equation*}
        \bigoplus_{i=1}^{n-1}\mathbb{R}X_i.
    \end{equation*}
    Moreover, for $i, j \in \{1, \cdots\!, n-1\}$, the metric tensor $g$ satisfies
    \begin{equation*}
        \left(\mathcal{L}_{X_0}g\right)(X_i, X_j) = X_0\underbrace{g(X_i, X_j)}_{\delta_{ij}} - g(\underbrace{[X_0, X_i]}_{f_iX_0}, X_j) - g(X_i, \underbrace{[X_0, X_j]}_{f_jX_0}) = 0.
    \end{equation*}
    Thus $ \mathcal{L}_{X_0}(g\rvert_{Q \times Q}) \equiv 0$, meaning the flow of $X_0$ is isometric when restricted to the normal bundle $Q$, and therefore the flow of $X_0$ (the foliation $\mathcal{F} $) is Riemannian.
    \par Now let us show that each leaf $\gamma$ of $\mathcal{F} $ is a geodesic.
    To see this, note that $\gamma$ is a minimal submanifold of $M$. 
    The mean curvature of $\gamma$ in the normal direction $N \in \Gamma(Q)$ is the trace of the Weingarten map 
    \begin{align*}
        W^N: \ker\omega &\longrightarrow \ker\omega \\
        Z &\longmapsto W^N(Z) := \pi_{Q}\nabla_Z(N)
    \end{align*}
    where $\nabla$ is the Levi-Civita connection of $g$ and $\pi_{Q}: \mathrm{T}M \to Q$ the natural projection. 
    Since $Z = fX_0$ for some function $f: M \to \mathbb{R}$, we have
    \begin{equation*}
        W^N(Z) = f\pi_Q\nabla_{X_0}(N),
    \end{equation*}
    so it is sufficient for our needs to work with $X_0$.
    \par Fix a normal field $N \in \Gamma(Q)$.
    First, we observe that $\nabla_NX_0$ is orthogonal to $X_0$, since
    \begin{equation*}
        0 = \nabla_Ng(X_0,X_0) = 2g(\nabla_NX_0, X_0). 
    \end{equation*}
    In particular, 
    \begin{align*}
        \lambda_0([N, X_0]) &= g([N,X_0],X_0) = g(\nabla_NX_0, X_0) - g(\nabla_{X_0}N, X_0) \\ 
        &= -g(W^N(X_0), X_0) = -\mathrm{tr}W^N.   
    \end{align*}    
    Using the above relation, we obtain,
    \begin{align*}
        \iota_N\mathrm{d}\lambda_0(X_0) &= \mathrm{d}\lambda_0(N, X_0) \\ 
        &= N\underbrace{\lambda_0(X_0)}_{1} - X_0\underbrace{\lambda_0(N)}_{  0} - \lambda_0([N, X_0])  = \mathrm{tr} W^N
    \end{align*}
    that is, the interior product $\iota_N\mathrm{d}\lambda_0$ evaluated $X_0$ yields the mean curvature in the $N$ direction. 
    \par Finally, every normal vector is foliate, hence $[N, X_0] = fX_0$ for some function $f: M \to \mathbb{R}$, and
    \begin{equation*}
        \mathrm{tr} W^X = \iota_N\mathrm{d}\lambda_0(X_0) = \mathrm{d}\lambda_0(fX_0, X_0) = 0,
    \end{equation*}
    and the flow-line $\gamma$ is therefore, minimal.
    This implies, in turn, that $\gamma$ must be a geodesic.
    Indeed, $\gamma$ is already parameterized by arclength, since 
    \begin{equation*} 
        \dot\gamma(s) = \frac{d}{dt}\Bigr|_{t=0} \gamma(s+t) = X_0(\gamma(s))
    \end{equation*} 
    by definition, and $g(X_0, X_0) = 1$ by construction.
    Since $X_0$ has unit length, it follows that for any normal vector field $N$, one has 
    \begin{equation*}
        0 = \nabla_{\dot\gamma}g(N, \dot\gamma) = g(\nabla_{\dot\gamma}N, \dot\gamma) + g(N, \nabla_{\dot\gamma}\dot\gamma),
    \end{equation*}
    hence 
    \begin{equation*}
        \mathrm{tr}W^N = g(W^N(\dot\gamma), \dot\gamma) = - g(N, \nabla_{\dot\gamma}\dot\gamma),
    \end{equation*}
    and therefore $\gamma$ is a minimal submanifold if and only if $\nabla_{\dot\gamma}\dot\gamma$ is tangent to $\gamma$.
    But we already know that $\nabla_{\dot\gamma}\dot\gamma$ is normal to $\gamma$, since 
    \begin{equation*}
    0 = \nabla_{\dot\gamma}g(\dot\gamma,\dot\gamma) = 2g(\nabla_{\dot\gamma}\dot\gamma, \dot\gamma),
    \end{equation*}
    so the only way the vector field $\nabla_{\dot\gamma}\dot\gamma$ can be tangent to $\gamma$ is if it is zero, that is, if $\gamma$ is a geodesic of $g$.
    Therefore, the leaf $\gamma$ is minimal if and only if it is a geodesic.
\end{proof}
\begin{remark}
    As noted throughout the proof, every flow-line of $X_0$ is a minimal submanifold of $(M, g)$, which is the same as to say the foliation $\mathcal{F}$ is \emph{harmonic}, or \emph{geometrically taut}.
\end{remark}
\begin{remark}
    As pointed out in Remark \ref{lambda0notclosed}, in general $\mathrm{d}\lambda_0 \neq 0$.
    This implies that any of the fields $X_i, i = 0, \cdots\!, n-1$  preserves any of the forms $\lambda_j, j = 1, \cdots\!, n-1$, since, by Cartan's formula,
    \begin{equation*}
        \mathcal{L}_{X_i}\lambda_j = \mathrm{d}(\underbrace{\iota_{X_i}\lambda_j}_{\delta_{ij}}) + \iota_{X_i}\underbrace{\mathrm{d}\lambda_0}_{0} = 0,
    \end{equation*}
    while on the other hand, for $\lambda_0$ this Lie derivative results in $\iota_{X_i}\mathrm{d}\lambda_0$. 
    As a consequence, we see that the metric $g = \sum_j \lambda_j\otimes\lambda_j$ satisfies
    \begin{equation*}
        \mathcal{L}_{X_0}g = \sum_{j = 0}^{n-1}\left(\mathcal{L}_{X_0}\lambda_j \otimes \lambda_j + \lambda_j \otimes \mathcal{L}_{X_0}\lambda_j\right) = \mathcal{L}_{X_0}\lambda_0 \otimes \lambda_0 + \lambda_0 \otimes \mathcal{L}_{X_0}\lambda_0.
    \end{equation*}
    So, in general, the flow of $X_0$ is Riemannian and geodesible with respect to $g$, but not isometric.
    We can, however, using Theorem \ref{int_har_char2}, find a metric tensor $g_0$ with respect to which $X_0$ is Killing, and therefore the flow of $X_0$ is equicontinuous with respect to $g_0$.
    Since any two metrics one $M$ are quasi-isometric (due to compactness), it follows that the flow of $X_0$ is also equicontinuous with respect to $g$.
    \end{remark}
\par We are finally able to prove Theorem \ref{main_thm}.
\begin{theorem}\label{torus_char}
    If $[\omega]\neq 0$ in $H^{n-1}_{dR}(M)$ then $M$ and  $\mathbb{T}^n$ are diffeomorphic. 
\end{theorem}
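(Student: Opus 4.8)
The plan is to assemble the results established earlier in the paper. First, by Theorem \ref{riem_geod_flow} there is a Riemannian metric $g$ on $M$ with respect to which the generator $X_0$ of the foliation $\mathcal{F} $ induced by $\omega$ is both Riemannian and geodesible. Since by hypothesis $[\omega]\neq 0$ in $H^{n-1}_{dR}(M)$, the implication $(1)\Rightarrow(3)$ of Theorem \ref{int_har_char2} applies and yields that $\omega$ is intrinsically harmonic; fix a Riemannian metric $h$ on $M$ for which $\omega$ is harmonic.

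Next I would produce the ``complementary'' closed $1$-form. Set $\eta := *_h\omega$, the Hodge dual of $\omega$ with respect to $h$. Because $\omega$ is closed and coclosed, $\eta$ is closed; because $\omega$ is nowhere vanishing and the Hodge star is a fiberwise isomorphism, $\eta$ is nowhere vanishing; and $\omega\wedge\eta = \lvert\omega\rvert_h^2\,\mathrm{vol}_h$ is a volume form on $M$. Consequently
\begin{equation*}
    \lambda_1\wedge\cdots\wedge\lambda_{n-1}\wedge\eta = \omega\wedge\eta
\end{equation*}
is nowhere zero, which is precisely to say that $\{\lambda_1,\cdots\!,\lambda_{n-1},\eta\}$ is a linearly independent set at every point of $M$. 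Thus the closed $n$-manifold $M$ carries $n$ pointwise linearly independent closed $1$-forms. (Equivalently, one could extract such an $\eta$ from the global cross-section furnished by Theorem \ref{pre_char} via the Tischler construction appearing in its proof; either route provides a closed nowhere-vanishing $1$-form complementary to the $\lambda_i$.)

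Finally, I would invoke verbatim the classical argument recalled in the Introduction: Tischler's perturbation replaces these $n$ closed $1$-forms by $n$ closed $1$-forms with integral periods that remain pointwise linearly independent (compactness keeps linear independence an open condition), producing a submersion $M\to\mathbb{T}^n$; since $M$ is compact this submersion is proper, hence a covering map \cite[Lemma 3]{ho_note_1975}, and every compact covering of $\mathbb{T}^n$ is diffeomorphic to $\mathbb{T}^n$. This completes the proof, and Theorem \ref{main_thm} follows at once.

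The genuine work has already been carried out in the cited statements --- in particular in Theorems \ref{riem_geod_flow} and \ref{int_har_char2}, where intrinsic harmonicity is extracted from the geodesibility/Riemannian-flow structure together with the nontriviality of $[\omega]$ (via equicontinuity, recurrence and Schwartzman's theory). Within the present argument the only point requiring a little care is the Hodge-duality bookkeeping of the second paragraph: one must verify simultaneously that $\eta=*_h\omega$ is closed, that it is nowhere zero, and that $\omega\wedge\eta$ is a volume form, before the introductory torus argument can be applied.
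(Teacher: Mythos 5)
Your proposal is correct and follows essentially the same route as the paper's own proof: Theorem \ref{riem_geod_flow} together with the implication $(1)\Rightarrow(3)$ of Theorem \ref{int_har_char2} gives intrinsic harmonicity, the Hodge dual $\eta=*\omega$ of the harmonic metric supplies the complementary closed nowhere-vanishing $1$-form, and the Tischler/covering argument from the Introduction concludes $M\cong\mathbb{T}^n$. The only difference is that you spell out the Hodge-duality bookkeeping and the final covering step, which the paper leaves implicit.
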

\begin{proof}
    As noted in the Introduction, it is sufficient to ensure the existence of a closed form $\eta$ such that the set $\{\omega_1,\cdots\!,\omega_{n-1},\eta\}$ is linearly independent.
    Since, according to Theorem \ref{riem_geod_flow}, the flow induced by $\omega$ (that is, the flow of $X_0$) is Riemannian and geodesible, and $[\omega]\neq 0$ in $H^{n-1}_{dR}(M)$ by hypothesis, it follows from Theorem \ref{int_har_char2} that $\omega$ is intrinsically harmonic, say harmonic for a metric $g_0$.
    As the Riemannian metric $g_0$ is such that $ *_{g_0}\omega$ is closed, the required form is given by $\eta=*_{g_0}\omega$.
\end{proof}
\begin{remark}
    One could also obtain Theorem \ref{main_thm} as follows.
    By Tischler's argument \cite{tischler_fibering_1970}, there exists a sequence of fibrations 
    \begin{equation*}
        \xi_n=(\pi_n,M,\mathbb{T}^{n-1})
    \end{equation*}
    such that $\pi_n^*(\Omega)\to \omega$, where $\Omega$ is a fixed volume form on $\mathbb{T}^{n-1}$, and the convergence is in the sense of De Rham currents.
    Since $[\omega]\neq 0$, the same is true when $\omega$ is viewed as current.
    On the other hand, the boundary subspace is closed in the current space and $\omega$ is null in cohomology if, and only if, is a boundary (as current).
    It follows that eventually $\pi_n^*(\Omega)$ determines a nonzero cohomological class.
    Let $m\in\mathbb{N}$ such that $\pi_m^*(\Omega)$ determines a nonzero cohomological class.
    It follows that the Euler characteristic class  $\chi(\xi_m)$ of $\xi_m$ is a torsion element (see \cite{franca_intrinsically_nodate,miyoshi2001remark}).
    On the other hand, it is well known that the circle bundles over $\mathbb{T}^{n-1}$ is in one-to-one correspondence with $H^2(\mathbb{T}^{n-1},\mathbb{Z})$, and the null class correspond to the trivial circle bundle.
    Since $H^2(\mathbb{T}^{n-1},\mathbb{Z})$ is torsion free, it follows that $\xi_m$ is trivial, hence $M$ is diffeomorphic to $\mathbb{S}^1\times\mathbb{T}^{n-1} \approx \mathbb{T}^n$, as desired. 
\end{remark}
\begin{remark}
    Let $M$ be a manifold with a closed-decomposable form 
    \begin{equation*}
        \omega=\omega_1\wedge\cdots\wedge\omega_p
    \end{equation*} 
    (i.e., each $\omega_i$ is a closed $1$-form).
    Again, by Tischler's argument, there exists a sequence of integers $(d_i)$ and a collection of smooth fiber bundles 
    \begin{equation*}
        \mathcal{B}_l=\{M,p_l,\mathbb{T}^p,F_l\}
    \end{equation*}
    such that 
    \begin{equation*}
        \omega_l=\frac{1}{d_l}p_l^*\Omega_{\mathbb{T}^p}\to \omega \text{ in } \mathcal{D}_p.
    \end{equation*}
    In particular, if $\omega$ determines a nonzero cohomological class, then the forms $\pi_l^*\Omega_{\mathbb{T}^n}$ are eventually not cohomologous to zero.
    Suppose $\omega$ is harmonic with respect to a Riemannian metric $g$ on $M$.
    Then $\eta=*_g\omega$ is a closed form of rank $(n-p)$.
    Since  $\omega_l\to \omega$ in $\mathcal{D}_p$ and $\omega\wedge\eta>0 $,  eventually we have  $\omega_l\wedge\eta>0$. 
    By \cite[Theorem 1]{franca_intrinsically_nodate} it follows that $\omega_l$ is intrinsically harmonic.
    From this point of view, the problem of whether closed-decomposable $p$-forms are intrinsically harmonic is equivalent to the study of closed-decomposable $p$-forms $\pi^*\Omega_{\mathbb{T}^p}$ in fiber bundles $\mathcal{B}=\{M,p,\mathbb{T}^p,F\}$ over the $p$-torus.
    This paper provides a positive answer for such a problem in the case where $p=n-1$. 
    In general, an answer to the problem is not known.
    It is also noteworthy that the existence of an linearly independent set consisting of $(n-1)$ $1$-forms do not imply that we have a $n$-torus. 
    For instance, every three-dimensional manifold is parallelizable, hence it admits a globally defined set consisting of two linearly independent $1$-forms (see \cite{milnor1974characteristic}).
\end{remark}

\section{Conclusion}
\par Initially, we conjectured that every geodesible flow on a compact manifold had to be equicontinuous. 
As it turned out, this is not the case, as counterexamples from Remark \ref{counterexamples} show. 
We were able, however, to provide a weaker characterization in the form of Theorem \ref{int_har_char2}, which allowed us, together with the theory exposed in the previous sections, to obtain the characterization of the $n$-torus in Theorem \ref{main_thm}. 
\par Our current goal is to characterize volume-preserving flows admitting a global cross-section. 
An immediate necessary condition for a volume-preserving flow to have one such cross-section is that a natural class determined by the flow must be nontrivial. 
In this paper, we show that, for a very particular case, the converse is true, and provide strategies that can be useful to approach the general problem. 
An affirmative answer to the general case would also lead us to a dual version of Tischler's theorem,  which we conjecture to be true. 



\end{document}